\documentclass[a4paper]{amsart}
\usepackage{ifpdf}
\usepackage[colorlinks,bookmarksopen]{hyperref}
\usepackage[all]{xypic}
\usepackage{amssymb}

\title{Cohomology of products and coproducts of augmented algebras}
\author{Matthew Towers}

\newcommand{\ext}{\operatorname{Ext}}
\newcommand{\tor}{\operatorname{Tor}}

\newcommand{\HH}{\operatorname{HH}}
\newcommand{\iHH}{\operatorname{iHH}}
\newcommand{\Hom}{\operatorname{Hom}}

\newcommand{\im}{\operatorname{im}}

\newcommand{\Aa}{\mathcal{A}}
\newcommand{\eps}{\varepsilon}
\newcommand{\id}{\operatorname{id}}

\newtheorem{theo}{Theorem}[section]
\newtheorem{lem}[theo]{Lemma}

\newtheorem{prop}[theo]{Proposition}
\newtheorem{rem}[theo]{Remark}

\newtheorem{ex}[theo]{Example}

\begin{document}
\begin{abstract} We show that the ordinary cohomology functor $\Lambda \mapsto \ext ^* _\Lambda (k,k)$ from the category of augmented $k$-algebras to itself exchanges coproducts and products, then that Hochschild cohomology is close to sending coproducts to products if the factors are self-injective.  We identify the multiplicative structure of the Hochschild cohomology of a product modulo a certain ideal in terms of the cohomology of the factors.
\end{abstract}
\maketitle
\begin{section}{Introduction}
An augmented $k$-algebra is a $k$-algebra $\Lambda$ equipped with a homomorphism $\Lambda \to k$.  The aim of this paper is to study how the ordinary cohomology $\ext_\Lambda ^* (k,k)$ and the
Hochschild cohomology $\HH(\Lambda)$ of such algebras interact with products and coproducts.

We begin with a brief overview of the category $\mathcal{A}$ of augmented $k$-algebras in Section \ref{cat_section}, including its product and coproduct.  We collect some useful information on Hochschild cohomology in Section \ref{hoch_section}.  In Section \ref{coproduct_section} we look at cohomology of coproducts, showing in Theorem \ref{ordinary_coprod} that the contravariant functor $E$ from $\mathcal{A}$ to itself sending $\Lambda$ to $\ext_\Lambda ^*(k,k)$  maps coproducts to products, and in Theorem \ref{hoch_coprod} that Hochschild cohomology very nearly does the same thing if the factors are self-injective.  These results are obtained easily using dimension shifting. Finally in Section \ref{product_section} we look at the cohomology of products of augmented algebras.  Given two augmented algebras $\Lambda$ and $\Gamma$ we construct a `small' bimodule resolution of the product $\Lambda * \Gamma$ from small bimodule resolutions of the factors.  Here a resolution $P$ being small means that the image of the differential is contained in  $I \cdot P$ where $I$ is the augmentation ideal --- if $\Lambda$ is a finite dimensional local algebra then this is equivalent to the resolution being minimal in the usual sense.  Our resolution can be used to study ordinary cohomology, and we use it in Theorem \ref{main_theo} to prove that the functor $E$ given above sends products to coproducts.  We then use a long exact sequence to investigate the Hochschild cohomology of $\Lambda * \Gamma$.  This is a long way from being the coproduct of the Hochschild cohomology algebras of its factors: there is no hope of this since Hochschild cohomology is graded commutative.  In Theorem \ref{hoch_prod}  we show that, modulo a certain ideal, $\HH(\Lambda * \Gamma)$ is the direct sum of the product of certain subalgebras of $\HH(\Lambda)$ and $\HH(\Gamma)$ with terms related to $E(\Lambda)$ and $E(\Gamma)$.  We also show in Proposition \ref{nilp_HH} that if $I(\Lambda)$ and $I(\Gamma)$ are nilpotent then every element of positive degree in the Hochschild cohomology of the product $\Lambda * \Gamma$ is  nilpotent with nilpotency degree at most the maximum of the nilpotency degrees of the augmentation ideals of the two factors.

Versions of our results on ordinary cohomology have already appeared in the literature, for example it was proved for graded algebras $A$ with $A_0=k$ in \cite[Proposition 1.1]{PP}, for local Noetherian commutative algebras over a field in \cite{Moore},  and for coproducts of groups in \cite[VI.14]{HS} (though the latter gives only an isomorphism of vector spaces).  

These results on $E$, regarded as a covariant functor $\mathcal{A} \to \mathcal{A}^{\mathrm{op}}$, amount to saying that it preserves certain (co)limits.  If $E$ had a left- (right-) adjoint this would be automatic \cite[V.5]{MacL}.  We show that neither adjoint exists in Section \ref{cat_section}.

\end{section}


\begin{section}{The category of augmented algebras} \label{cat_section}

Let $k$ be a field.  We define the category $\Aa$ of augmented $k$-algebras as follows: an object of $\Aa$ is a pair $(\Lambda, \varepsilon_\Lambda)$ where $\Lambda$ is a unital $k$-algebra and $\varepsilon _\Lambda $ is an algebra homomorphism $ \Lambda \to k$.  
We write $I(\Lambda)$ for $\ker \varepsilon_\Lambda$.  
A morphism in $\Aa$ between $(\Lambda,\varepsilon_\Lambda)$ and $(\Gamma,\varepsilon_\Gamma)$ is a $k$-algebra homomorphism $f:\Lambda \to \Gamma$ that preserves the augmentations, that is $\varepsilon_\Lambda = \varepsilon_\Gamma \circ f$ (so that $\Aa$ is a comma category in the sense of \cite[II.6]{MacL}).  An augmented algebra is an algebra that appears as the first coordinate of an object of $\Aa$.

\begin{ex} 
Any group algebra $kG$ is augmented with augmentation given by $g \mapsto 1$ for all $g \in G$.  If $Q$ is a quiver and $I$ is an admissible ideal of $kQ$ then for each vertex $e$ of $Q$ there is an augmentation $\eps _e$ of $kQ/I$ sending (the image of) each arrow to zero, $e$ to $1$, and every other vertex to zero.
\end{ex}

Given any two augmented algebras $\Lambda$ and $\Gamma$ we have a diagram
\[ \xymatrix{ & \Lambda \ar[d]^{\varepsilon_\Lambda} \\
\Gamma \ar[r]^{\varepsilon_\Gamma} & k } \]
in the category of $k$-algebras.  We define $\Lambda * \Gamma$ to be the  pullback of this diagram, so that $\Lambda * \Gamma= \{ (\lambda, \gamma) \in \Lambda \oplus \Gamma : \eps_\Lambda \lambda = \eps_\Gamma \gamma \}$.  Let $p_\Lambda$ and $p_\Gamma$ be the projection maps from the pullback to $\Lambda$ and $\Gamma$.  Notice $\Lambda * \Gamma$  is an augmented algebra when equipped with the augmentation $\eps_{\Lambda *\Gamma} = \eps_\Lambda \circ p_\Lambda = \eps _\Gamma \circ p_\Gamma$.  We have:

\begin{lem}
$* $ is a product in $\mathcal{A}$ with canonical projections $p_\Lambda$ and $p_\Gamma$.
\end{lem}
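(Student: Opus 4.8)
The plan is to verify the universal property of a categorical product directly. Since $\Aa$ is a comma category over $k$, one expects products in $\Aa$ to be computed as fibre products of the underlying algebras, and the pullback $\Lambda * \Gamma$ is built to realise exactly this. First I would confirm that $p_\Lambda$ and $p_\Gamma$ are genuinely morphisms of $\Aa$, not merely of $k$-algebras: this is immediate from the way the augmentation on $\Lambda * \Gamma$ was defined, since $\eps_{\Lambda * \Gamma} = \eps_\Lambda \circ p_\Lambda = \eps_\Gamma \circ p_\Gamma$ says precisely that each projection intertwines the augmentations.

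Next, given any augmented algebra $\Theta$ together with morphisms $f \colon \Theta \to \Lambda$ and $g \colon \Theta \to \Gamma$ in $\Aa$, I would produce the required mediating map as the diagonal $h(\theta) = (f(\theta), g(\theta))$. The one point that genuinely uses the hypotheses is checking that $h$ lands inside the subalgebra $\Lambda * \Gamma$ of $\Lambda \oplus \Gamma$: for this one needs $\eps_\Lambda(f(\theta)) = \eps_\Gamma(g(\theta))$, which holds because $f$ and $g$ are augmentation-preserving, so that both sides equal $\eps_\Theta(\theta)$. This is exactly the step where the defining equation of the fibre product coincides with the condition for $f$ and $g$ to be morphisms of $\Aa$, and it carries whatever content the statement has.

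It then remains to dispatch the routine points. That $h$ is a $k$-algebra homomorphism is inherited componentwise from $f$ and $g$, and $h$ preserves augmentations since $\eps_{\Lambda * \Gamma} \circ h = \eps_\Lambda \circ p_\Lambda \circ h = \eps_\Lambda \circ f = \eps_\Theta$; hence $h$ is a morphism of $\Aa$. By construction $p_\Lambda \circ h = f$ and $p_\Gamma \circ h = g$. For uniqueness, any $h'$ with $p_\Lambda \circ h' = f$ and $p_\Gamma \circ h' = g$ has both of its coordinates prescribed and so must equal $h$. I do not anticipate a real obstacle here: the argument is essentially formal, and the only step needing care is the well-definedness of $h$ as a map into the pullback, which is forced once the fibre-product description of $\Lambda * \Gamma$ is combined with the augmentation-preserving hypothesis on $f$ and $g$.
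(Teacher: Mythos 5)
Your proof is correct and complete: the paper states this lemma without proof, treating it as routine, and your argument is exactly the verification it leaves implicit --- namely that in the comma category $\mathcal{A}$ over $k$ the product is the fibre product of the underlying algebras, checked via the universal property with the mediating map $h(\theta)=(f(\theta),g(\theta))$. The one step you correctly single out as carrying the content --- that $h$ lands in $\Lambda * \Gamma$ because $f$ and $g$ both intertwine augmentations with $\eps_\Theta$ --- is indeed the only non-formal point.
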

%
For example, the product of $k[x]/x^2$ with itself is isomorphic to $k[x,y]/(x^2, y^2, xy)$.  These products can occur naturally:

\begin{ex} 
Let $k$ have characteristic $3$ and let \[D =\langle x,y,z | x^3, y^3, z^3, [x,y]=z \in Z(D) \rangle\] be the extra-special group of order $27$ and exponent $3$.  Then the endomorphism algebra of the transitive permutation module $k _{\langle x\rangle}\!\! \uparrow ^D$ is
\[  k[a]/a^3 * k[b]/b^2 * k[c]/c^2. \]
For further examples of this type see \cite{mt}.
\end{ex}

%
%

This product satisfies the following Chinese Remainder-type result:
\begin{lem}
 Let $\Lambda$ be an augmented $k$-algebra with ideals $I,J$ such that $I+J=I(\Lambda)$ and $I\cap J = IJ$.  Then $\Lambda/IJ \cong \Lambda/I * \Lambda/J$.
\end{lem}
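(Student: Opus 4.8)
The plan is to write down an explicit isomorphism, imitating the proof of the classical Chinese Remainder Theorem but keeping track of augmentations throughout. Since $I+J=I(\Lambda)$ we have $I,J \subseteq I(\Lambda)$, so the augmentation $\eps_\Lambda$ descends to augmentations on $\Lambda/I$ and $\Lambda/J$, making both of them objects of $\Aa$; and since $IJ \subseteq I \cap J$ always, there are quotient maps $\Lambda/IJ \to \Lambda/I$ and $\Lambda/IJ \to \Lambda/J$. First I would define $\phi : \Lambda/IJ \to \Lambda/I \oplus \Lambda/J$ by $\phi(\lambda + IJ) = (\lambda + I, \lambda + J)$. This is a well-defined algebra homomorphism because $IJ \subseteq I$ and $IJ \subseteq J$, and because the augmentations on the two factors are both induced by $\eps_\Lambda$, the two coordinates of $\phi(\lambda + IJ)$ always have equal augmentation; hence $\phi$ in fact lands in the pullback $\Lambda/I * \Lambda/J$.

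Injectivity is quick: the kernel of the composite $\Lambda \to \Lambda/I \oplus \Lambda/J$ is $I \cap J$, so by the hypothesis $I \cap J = IJ$ the induced map $\phi$ on $\Lambda/IJ$ is injective. The one substantive point is surjectivity onto the pullback, and this is exactly where the hypothesis $I+J = I(\Lambda)$ is used in place of the comaximality $I+J=\Lambda$ of the usual theorem. Given an element $(\lambda + I, \mu + J)$ of $\Lambda/I * \Lambda/J$, the defining condition of the pullback says $\eps_\Lambda(\lambda) = \eps_\Lambda(\mu)$, so $\lambda - \mu \in I(\Lambda) = I + J$. Writing $\lambda - \mu = a + b$ with $a \in I$ and $b \in J$ and setting $\nu = \lambda - a = \mu + b$, I get $\nu - \lambda \in I$ and $\nu - \mu \in J$, whence $\phi(\nu + IJ) = (\lambda + I, \mu + J)$.

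I expect the whole argument to be essentially routine; the only place demanding care is checking that the weaker assumption $I+J=I(\Lambda)$ still supplies the decomposition of $\lambda - \mu$, which works precisely because the pullback forces $\lambda - \mu$ into the augmentation ideal rather than merely into $\Lambda$. This is also the structural reason why the categorical product $*$ in $\Aa$ (a pullback over $k$) appears here in place of the full direct product $\Lambda/I \times \Lambda/J$ of the classical statement: dropping the augmentation condition would require $I + J = \Lambda$, which need not hold.
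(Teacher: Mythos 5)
Your proof is correct, and it fills in exactly the routine argument the paper takes for granted: the lemma is stated there without proof. The map $\lambda + IJ \mapsto (\lambda+I,\lambda+J)$, injectivity from $I\cap J = IJ$, and surjectivity onto the pullback via $\lambda-\mu \in I(\Lambda)=I+J$ is the intended Chinese-Remainder adaptation, and your closing remark correctly identifies why the augmentation condition lets the weaker hypothesis $I+J=I(\Lambda)$ replace comaximality.
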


In $\Aa$ there is a free product construction like that in the category of groups.  Given augmented $k$-algebras $\Lambda$ and $\Gamma$ let $R_0=k$ and $R_n = (I(\Lambda) \otimes _k I(\Gamma) \otimes_k \ldots)  \oplus  (I(\Gamma) \otimes_k I(\Lambda) \otimes_k \ldots)$ where both summands have length $n>0$.  Let $\Lambda \sqcup \Gamma$ be $\bigoplus_{n\geq 0} R_n$ equipped with multiplication 
\[ (u_1 \otimes \cdots \otimes u_k ) \cdot ( v_1 \otimes \cdots \otimes v_l ) = u_1 \otimes \cdots \otimes (u_k v_1) \otimes v_2 \otimes \cdots \otimes v_l \]
if $u_k$ and $v_1$ are both in $\Lambda$ or both in $\Gamma$, and 
\[u_1 \otimes \cdots \otimes u_k \otimes v_1 \otimes v_2 \otimes \cdots \otimes v_l\]
otherwise.  This construction makes $\Lambda \sqcup \Gamma$ into an augmented algebra, the augmentation quotienting out all $R_n$ with $n>0$.  Write $i_\Lambda$ and $i_\Gamma$ for the obvious inclusions in degrees zero and one.  We then have:

\begin{lem} $\sqcup $ is a coproduct in $\mathcal{A}$ with canonical injections $i_\Lambda$ and $i_\Gamma$. \end{lem}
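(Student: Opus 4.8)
The plan is to verify directly that $\Lambda \sqcup \Gamma$ together with $i_\Lambda$ and $i_\Gamma$ satisfies the universal property of a coproduct in $\Aa$. So I fix an augmented algebra $\Delta$ and morphisms $f \colon \Lambda \to \Delta$ and $g \colon \Gamma \to \Delta$ in $\Aa$, and I must produce a unique morphism $h \colon \Lambda \sqcup \Gamma \to \Delta$ with $h \circ i_\Lambda = f$ and $h \circ i_\Gamma = g$. Since $f$ and $g$ preserve augmentations we have $f(I(\Lambda)) \subseteq I(\Delta)$ and $g(I(\Gamma)) \subseteq I(\Delta)$, and it is this containment that makes the construction below land in $\Delta$ in a compatible way.

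I would define $h$ grade by grade. On $R_0 = k$ let $h$ be the unit map $c \mapsto c \cdot 1_\Delta$, and on a generator $u_1 \otimes \cdots \otimes u_n$ of $R_n$ ($n > 0$) let $h(u_1 \otimes \cdots \otimes u_n)$ be the ordered product in $\Delta$ of the elements $f(u_i)$ (when $u_i \in I(\Lambda)$) and $g(u_i)$ (when $u_i \in I(\Gamma)$). Within each tensor summand of $R_n$ the pattern of which slots lie in $I(\Lambda)$ and which in $I(\Gamma)$ is fixed, so this assignment is $k$-multilinear and descends by the universal property of the tensor product to a well-defined linear map on each $R_n$, hence on all of $\Lambda \sqcup \Gamma$.

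The substantive step is to check that $h$ is an algebra homomorphism. Unitality is immediate. For multiplicativity I would compare $h(a \cdot b)$ with $h(a)h(b)$ for homogeneous $a \in R_k$ and $b \in R_l$, using the two clauses of the multiplication. When the last factor of $a$ and the first factor of $b$ lie in different algebras the product is the concatenation, and $h$ of a concatenation is visibly the product of the two images, so the two sides agree. When they lie in the same algebra, say both in $I(\Lambda)$, the product merges these two factors into $u_k v_1$, which again lies in $I(\Lambda)$ because $I(\Lambda)$ is an ideal; here multiplicativity reduces exactly to $f(u_k v_1) = f(u_k) f(v_1)$, which holds since $f$ is an algebra homomorphism. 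This \emph{merging} case is where the argument really has content, and it is the step I expect to require the most care. That $h$ preserves augmentations is then easy, as $h$ sends $R_0$ into $k \cdot 1_\Delta$ and each $R_n$ with $n > 0$ into a product of elements of $I(\Delta)$, hence into $I(\Delta)$, so that $\eps_\Delta \circ h$ agrees with $\eps_{\Lambda \sqcup \Gamma}$.

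Finally I would verify compatibility and uniqueness. For $\lambda \in \Lambda$ one has $i_\Lambda(\lambda) = \eps_\Lambda(\lambda) \cdot 1 + (\lambda - \eps_\Lambda(\lambda) 1)$ with the second summand in the $I(\Lambda)$ part of $R_1$, and applying $h$ gives $\eps_\Lambda(\lambda) 1_\Delta + f(\lambda - \eps_\Lambda(\lambda) 1) = f(\lambda)$; thus $h \circ i_\Lambda = f$, and symmetrically $h \circ i_\Gamma = g$. For uniqueness, observe that for $x \in I(\Lambda)$ we have $i_\Lambda(x) = x$ as an element of $R_1$, and likewise for $I(\Gamma)$, so $R_1 = I(\Lambda) \oplus I(\Gamma)$ together with $R_0$ generates $\Lambda \sqcup \Gamma$ as an algebra: every tensor $u_1 \otimes \cdots \otimes u_n$ is the product $u_1 \cdots u_n$ of its factors viewed in $R_1$. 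Any augmentation-preserving homomorphism extending $f$ and $g$ is therefore forced to agree with $h$ on $R_1$, hence everywhere, which gives uniqueness and completes the verification.
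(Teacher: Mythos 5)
Your proof is correct. The paper states this lemma without proof, treating it as routine; your direct verification of the universal property --- defining $h$ by multilinearity on each summand of $R_n$, checking multiplicativity in the two cases (concatenation versus merging, the latter reducing to $f(u_kv_1)=f(u_k)f(v_1)$), and deducing uniqueness from the fact that $R_0$ and $R_1 = I(\Lambda)\oplus I(\Gamma)$ generate $\Lambda\sqcup\Gamma$ as an algebra --- is exactly the argument the paper leaves implicit.
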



For example, $k[x] \sqcup k[y]$ is the free associative algebra on two variables.

\begin{rem}
Define a local $k$-algebra to be a $k$-algebra $\Lambda$ with a unique maximal left ideal $\mathfrak{m}$ such that $\Lambda / \mathfrak{m} \cong k$ (any such ideal is automatically two-sided).  Let $\mathcal{L}$ be the category of local algebras, where the morphisms are homomorphisms of algebras.  Then the obvious inclusion is a fully faithful embedding $\mathcal{L} \hookrightarrow \Aa$ (fullness follows because any map of local algebras automatically preserves the maximal ideals).  Then $*$ is a product in $\mathcal{L}$, but $\sqcup$ isn't a coproduct in $\mathcal{L}$.  The coproduct in $\Aa$ of $k[x]/x^2$ and $k[y]/y^2$  is $k\langle x,y \rangle / (x^2, y^2)$, which is not local as neither $xy$ nor $1-xy$ has a left-inverse (in a local algebra the non-units form a left-ideal \cite[5.21]{CR1}).  We conjecture that in general $\mathcal{L}$ does not have a coproduct of $k[x]/x^2$ and $k[y]/y^2$.
\end{rem}

The one-dimensional algebra $k$ is both an initial object (empty product) and terminal object (empty coproduct) in $\Aa$.  
%


Write $E(\Lambda)$ for the ordinary cohomology ring $\ext^*_\Lambda(k,k)$, augmented by the map killing all elements of positive degree, and write $E^n(\Lambda)$ for $\ext^n_\Lambda(k,k)$.  Change of rings \cite[VIII, \S3]{CE} can be used to make $E$ into a contravariant endofunctor on $\Aa$.  Let $f$ be an augmentation preserving map $\Lambda \to \Gamma$, so that $E(f)$ should be a map $E(\Gamma) \to E(\Lambda)$.  The definition of $E(f)$ is particularly straightforward when we regard $\ext^n_\Gamma(k,k)$ as the set of equivalence classes of exact sequences of $\Gamma$ modules of the form
\[ 0 \to k \to N_{n-1} \to \cdots \to N_0 \to k \to 0. \]
$E(f)$ simply maps the class of this sequence to the class of the same sequence, regarded as a sequence of $\Lambda$ modules via $f$. 
More commonly we will define $E(f)$ as follows: take projective resolutions $
P_*$ and $Q_*$ of $k$ as a $\Lambda$ and $\Gamma$ module respectively.  The $Q_n$ become $\Lambda$ modules via $f$, so there is a chain map $H$ lifting the identity $k\to k$:
\[
\xymatrix{
P_* \ar[r] \ar[d]^H & k \ar@{=}[d] \\
Q_* \ar[r] &k
} 
\]
Now to define $E(f)$ on an element $a$ of $\ext^n_\Gamma (k,k)$ we pick a representative cocycle $\alpha : Q_n \to k$ and let $E(f)(a)$ be the cohomology class of $\alpha \circ H: P_n \to k$. 

Some of the results we prove in the next two sections amount to saying that the functor $E$ preserves certain (co)limits.  This would be immediate if a left (right) adjoint existed.

\begin{lem}
$E$ has no right or left adjoint.
\end{lem}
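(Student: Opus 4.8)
The plan is to regard $E$ as the covariant functor $\tilde E\colon \Aa\to\Aa^{\op}$ and to disprove each adjoint by exhibiting a single map that $E$ mishandles. The preservation statements already established in the paper do not help here: sending coproducts to products is exactly what a left adjoint to $\tilde E$ would guarantee, and sending products to coproducts is what a right adjoint would guarantee, so these properties are individually consistent with either adjoint and cannot obstruct them. Instead I will use that a right adjoint preserves monomorphisms while a left adjoint preserves epimorphisms. Translating across $\Aa^{\op}$, a left adjoint to $\tilde E$ would force $E$ to send every monomorphism of $\Aa$ to an epimorphism, and a right adjoint would force $E$ to send every epimorphism to a monomorphism; so it suffices to produce one mono whose $E$-image is not epi and one epi whose $E$-image is not mono. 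I will use throughout that in $\Aa$ a map is mono iff it is injective (the forgetful functor to $k$-vector spaces is a right adjoint, its left adjoint being the augmented tensor algebra, hence it preserves monos) and that every surjection is epi.

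For the non-existence of a right adjoint, take the quotient $f\colon k[x]\to k[x]/(x^2)$, a surjection and hence an epimorphism of $\Aa$. Here $E(k[x])=k[y]/(y^2)$ and $E(k[x]/(x^2))=k[t]$, each with generator in degree $1$. Since $f$ induces an isomorphism on $I/I^2$, the graded algebra map $E(f)\colon k[t]\to k[y]/(y^2)$ sends $t\mapsto y$, hence $t^2\mapsto y^2=0$. Thus $E(f)$ has nonzero kernel, so it is not injective and therefore not a monomorphism of $\Aa$. This contradicts the preservation of epimorphisms demanded of the left adjoint $\tilde E$, so no right adjoint exists.

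For the non-existence of a left adjoint, take $f\colon k[x]/(x^2)\to k[x]/(x^3)$, $x\mapsto x^2$; this is injective, hence a monomorphism of $\Aa$. Writing $A=k[x]/(x^2)$ and $B=k[x]/(x^3)$, I claim $E(f)\colon E(B)\to E(A)=k[t]$ has image exactly $k$. Because $f(I(A))=(x^2)\subseteq I(B)^2$, a comparison map between the minimal (small, in the sense of this paper) resolutions of $k$ over $A$ and over $B$ can be chosen to raise the $I(B)$-adic filtration in every positive degree: inductively $d^Q_n$ carries $I(B)Q_n$ onto $I(B)\cdot\im d^Q_n$, which is enough room to lift into $I(B)Q_n$. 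Composing with $\eps_B$ then kills $E(f)$ in all positive degrees, so $\im E(f)=k$. Hence $E(f)$ factors as $E(B)\twoheadrightarrow k\hookrightarrow k[t]=E(A)$, and as $k\hookrightarrow k[t]$ is not epi (the identity and the augmentation of $k[t]$ agree on $k$ but differ), neither is $E(f)$. This contradicts the preservation of monomorphisms demanded of the right adjoint $\tilde E$, so no left adjoint exists.

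The conceptual skeleton is routine; the content lies in the two explicit computations, and the point to watch is that an epimorphism of algebras need not be surjective, so merely showing $E(f)$ is not onto would not show it is not epi. This is exactly why in the second example I arrange $f(I(A))\subseteq I(B)^2$: it collapses the image of $E(f)$ all the way to $k$, so that non-epiness follows from the transparent failure of $k\hookrightarrow k[t]$ rather than from a delicate analysis of a proper non-surjective subalgebra inclusion. I expect verifying this collapse — the filtration-raising property of the comparison map, which is precisely the smallness of resolutions exploited elsewhere in the paper — to be the main technical step.
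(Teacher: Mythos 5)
Your skeleton is the same as the paper's: a left adjoint would force $E$ to send monomorphisms to epimorphisms, a right adjoint would force it to send epimorphisms to monomorphisms, and both are ruled out by explicit examples. Your right-adjoint half is correct: for the surjection $f\colon k[x]\to k[x]/(x^2)$ the map $E(f)\colon k[t]\to k[y]/(y^2)$ is an isomorphism in degree one (it is dual to the induced map on $I/I^2$), so it sends $t$ to a nonzero multiple of $y$ and hence kills $t^2\neq 0$; not injective, hence not mono. This is a different example from the paper's ($k[x]/x^3\to k[x]/x^2$, $x\mapsto x$), but your mechanism --- degree-one isomorphism plus vanishing of the square in the target --- is the correct one; in fact the paper's assertion that its own $E(f)$ is ``zero in degree one'' is inaccurate for that example (the induced map on $I/I^2$ is an isomorphism there too), and non-injectivity really comes from the square, exactly as in your argument.

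The left-adjoint half contains a genuine gap: the ``filtration-raising'' lemma you yourself flag as the main technical step is false. You claim that $f(I(A))\subseteq I(B)^2$ allows the comparison map $H_*\colon P_*\to Q_*$ between minimal resolutions to be chosen with $H_n(P_n)\subseteq I(B)Q_n$ for all $n\geq 1$, whence $E(f)=0$ in positive degrees. The inductive step breaks: the induction gives $H_{n-1}d^P_n(P_n)\subseteq \im d^Q_n\cap I(B)^2Q_{n-1}$, but to lift into $I(B)Q_n$ you need containment in $d^Q_n(I(B)Q_n)=I(B)\cdot\im d^Q_n$, which is in general strictly smaller. The paper's own left-adjoint example refutes the lemma: for $f\colon k[x]/x^2\to k[x]/x^4$, $x\mapsto x^2$, which does satisfy $f(I(A))\subseteq I(B)^2$, the minimal resolution over $B=k[x]/x^4$ has $d^Q_1=\cdot\, x$ and $d^Q_2=\cdot\, x^3$; one computes $H_1(1)=x$, then $H_1(d^P_2(1))=x^2\cdot x=x^3\neq 0$ while $I(B)\cdot \im d^Q_2=(x^4)=0$, so no lift into $I(B)Q_2$ exists --- indeed $H_2(1)\in 1+(x)$ is forced, and $E^2(f)$ sends the degree-two generator $b$ of $k[a,b]/a^2$ to a nonzero multiple of $t^2$. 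So ``$f(I(A))\subseteq I(B)^2$ implies $E(f)$ vanishes in positive degrees'' is simply false. Your particular example survives only by an accident of $B=k[x]/x^3$: there $H_1(d^P_2(1))=x^2\cdot x=x^3=0$, so one may take $H_2=0$ and then all higher $H_n=0$, and $\im E(f)=k$ does hold --- but that is a direct computation special to this algebra, not an instance of your lemma. Moreover the collapse to $k$ is unnecessary: $E^1(f)=0$ is immediate (it is dual to the zero map $I(A)/I(A)^2\to I(B)/I(B)^2$), the image of the graded map $E(f)$ is then a graded subalgebra of $k[t]$ with zero component in degree one, and the quotient map and the map killing all positive degrees, both $k[t]\to k[t]/(t^2)$, are distinct yet agree on any such subalgebra. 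That is exactly the paper's argument, and it repairs your proof without the false lemma.
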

\begin{proof}
Recall that $E(k[x]/x^2)=k[a]$ with $a$ in degree one, and for $r>2$, $E(k[x]/x^r)=k[a,b]/a^2$ with $a$ in degree one and $b$ in degree two.  

If a contravariant functor has a left adjoint, it sends monomorphisms to epimorphisms.  Apply $E$ to the monomorphism $f: k[x]/x^2 \to k[x]/x^4$ that sends $x$ to $x^2$.  The resulting map $E(f): k[a,b]/a^2 \to k[a]$ is zero in degree one, and is not an epimorphism (composing with the zero and quotient maps $k[a] \to k[a]/a^2$ gives the same result).

If $E$ had a right adjoint it would have to send epimorphisms to monomorphisms.  This time let $f$ be the epimorphism $k[x]/x^3 \to k[x]/x^2$ given by  $x\mapsto x$.  Then $E(f)$ is zero in degree one, therefore not a  monomorphism (the monomorphisms in $\Aa$ are exactly the one-to-one homomorphisms, although epimorphisms need not be onto).
\end{proof}

The same examples can be adapted to show that $E$ does not exchange equalisers and coequalisers.
 
\end{section}


\begin{section}{Hochschild cohomology} \label{hoch_section}

In this section we review some standard definitions and results about Hochschild cohomology.

Given an algebra $\Lambda$, we define the Hochschild cohomology ring $\HH(\Lambda)$ to be $\bigoplus _{n\geq0} \ext ^n _{\Lambda ^e} (\Lambda, \Lambda)$ equipped with the Yoneda product, and we write $\HH^n(\Lambda)$ for $\ext ^n _{\Lambda ^e} (\Lambda, \Lambda)$. Here $\Lambda ^e$ is the enveloping algebra $\Lambda \otimes _k \Lambda ^{\mathrm{op}}$ and $\Lambda$ is regarded as a left $\Lambda ^e$ module by $(\lambda \otimes \gamma) \cdot x = \lambda x \gamma$.  The categories of left $\Lambda^e$ modules and $\Lambda$-$\Lambda$ bimodules are isomorphic, and we will often work with bimodules for convenience. $\HH(\Lambda)$ is a graded-commutative
%
%
$k$-algebra under the Yoneda product, and $\HH^0(\Lambda)$ is the centre $Z(\Lambda)$.

Hochschild cohomology is not functorial in $\Lambda$.  Functoriality can be recovered by working instead with $\ext ^* _{\Lambda ^e} (\Lambda, \Hom_k(\Lambda,k))$ (this is done for example in \cite{Loday, bensonII}),
the disadvantage of doing this is that there is no obvious ring structure. The two definitions coincide only for symmetric algebras.

If $\Lambda$ is augmented then there is an exact sequence of $\Lambda$-$\Lambda$ bimodules
\[ \label{a_ses} 0 \to I(\Lambda) \to \Lambda \stackrel{\epsilon} {\to} k \to 0. \]
Applying $\Hom_{\Lambda^e} (\Lambda, -)$ gives a long exact sequence
\begin{equation} \label{ILles}
\cdots \to \ext ^n _{\Lambda^e} (\Lambda, I(\Lambda)) \to \HH^n(\Lambda) \stackrel{\phi_k}{\to} E^n(\Lambda) \stackrel{\omega_\Lambda}{\to} \ext ^{n+1}_{\Lambda^e}(\Lambda, I(\Lambda)) \to \cdots
\end{equation}
where we have made the identification $\ext_{\Lambda^e} ^n(\Lambda, k ) \cong E^n(\Lambda)$ using \cite[X, Theorem 2.1]{CE}.
If $\Delta$ is a graded ring then the \textbf{graded centre} of $\Delta$ is defined to be  the span of all homogeneous elements $z$ of $\Delta$ such that $zg=(-1)^{\operatorname{dg}(z)\operatorname{dg}(g)}gz$ for all homogeneous $g \in \Delta$.  The map $\phi_k$ is a homomorphism of graded rings, and \cite[Theorem 1.1]{SS} shows its image lies in the graded centre of $E(\Lambda)$  (in their notation, take $M=N=\Lambda$ and $\Gamma=k$).   If $I(\Lambda)^n=0$, then the kernel of $\phi_k$ is nilpotent with each element having nilpotency index at most $n$ by \cite[Proposition 4.4]{SS} --- the result stated there is for the radical rather than the augmentation ideal, but the proof goes through without changes.
\end{section}


\begin{section}{Cohomology of coproducts} \label{coproduct_section}

We begin with ordinary cohomology:

\begin{theo} \label{ordinary_coprod}
Let $\Lambda$ and $\Gamma$ be augmented $k$-algebras.
%
%
Then
\[ \ext ^* _{\Lambda \sqcup \Gamma} (k,k) \cong \ext^* _\Lambda (k,k) * \ext ^*_\Gamma (k,k) \]
\end{theo}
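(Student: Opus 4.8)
The plan is to prove the statement by showing that the \emph{canonical} morphism to the product is an isomorphism, rather than by exhibiting an isomorphism by hand. Write $\Upsilon=\Lambda\sqcup\Gamma$, and note first that, since the augmentations of $E(\Lambda)$ and $E(\Gamma)$ kill all positive-degree elements, the product $E(\Lambda)*E(\Gamma)$ in $\mathcal A$ is concretely $k\oplus E^{>0}(\Lambda)\oplus E^{>0}(\Gamma)$, with multiplication making $E^{>0}(\Lambda)$ and $E^{>0}(\Gamma)$ ideals that annihilate one another. The coproduct inclusions $i_\Lambda,i_\Gamma$ give ring homomorphisms $E(i_\Lambda)\colon E(\Upsilon)\to E(\Lambda)$ and $E(i_\Gamma)\colon E(\Upsilon)\to E(\Gamma)$ which agree in degree zero, so by the universal property of $*$ they assemble into a morphism $\theta\colon E(\Upsilon)\to E(\Lambda)*E(\Gamma)$ in $\mathcal A$. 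Because $\theta$ is automatically a homomorphism of augmented algebras, it suffices to prove that $\theta$ is bijective in each degree; in particular the vanishing of the cross products $E^{>0}(\Lambda)\cdot E^{>0}(\Gamma)$ inside $E(\Upsilon)$ comes for free once $\theta$ is known to be an isomorphism onto this explicit algebra.

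Surjectivity is the easy half. The universal property of the coproduct yields retractions $r_\Lambda\colon\Upsilon\to\Lambda$ and $r_\Gamma\colon\Upsilon\to\Gamma$; here $r_\Lambda$ is determined by $\id_\Lambda$ on the $\Lambda$-factor and by the composite $\Gamma\xrightarrow{\eps_\Gamma}k\hookrightarrow\Lambda$ on the $\Gamma$-factor. Then $r_\Lambda i_\Lambda=\id$, while $r_\Lambda i_\Gamma$ factors through $k$ and so is killed by $E$ in positive degrees. Applying the contravariant $E$ and setting $\sigma^n(a,b)=E^n(r_\Lambda)(a)+E^n(r_\Gamma)(b)$, these relations give $\theta^n\sigma^n=\id$ on $E^n(\Lambda)\oplus E^n(\Gamma)$ for every $n>0$, so $\theta$ is a split epimorphism.

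For injectivity I would count dimensions by dimension shifting, which rests on a module-theoretic backbone. The normal form for the free product shows that $\Upsilon$ is free as a right $\Lambda$-module and as a right $\Gamma$-module, and that there is a left $\Upsilon$-module decomposition $I(\Upsilon)\cong(\Upsilon\otimes_\Lambda I(\Lambda))\oplus(\Upsilon\otimes_\Gamma I(\Gamma))$, the two summands being the spans of words ending in a $\Lambda$-letter, respectively a $\Gamma$-letter. Because $\Upsilon$ is flat over $\Lambda$, the functor $\Upsilon\otimes_\Lambda-$ is exact and sends projectives to projectives, and the tensor--hom adjunction gives $\Hom_\Upsilon(\Upsilon\otimes_\Lambda P,k)\cong\Hom_\Lambda(P,k)$; hence $\ext^*_\Upsilon(\Upsilon\otimes_\Lambda M,k)\cong\ext^*_\Lambda(M,k)$, and likewise for $\Gamma$. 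Dimension shifting along $0\to I(\Upsilon)\to\Upsilon\to k\to 0$ (using that $\Upsilon$ is projective over itself) together with the same shift along the sequences for $\Lambda$ and $\Gamma$ then yields $E^n(\Upsilon)\cong\ext^{n-1}_\Upsilon(I(\Upsilon),k)\cong\ext^{n-1}_\Lambda(I(\Lambda),k)\oplus\ext^{n-1}_\Gamma(I(\Gamma),k)\cong E^n(\Lambda)\oplus E^n(\Gamma)$ for all $n\geq 1$. Thus source and target of $\theta$ have the same dimension in each degree, so the split epimorphism $\theta$ is an isomorphism whenever these dimensions are finite.

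The main obstacle is the last step in full generality: I must check that the isomorphism produced by dimension shifting is the map $\theta$ itself, equivalently that $E^n(i_\Lambda)$ is the projection onto the $E^n(\Lambda)$-summand, so that $\ker\theta^n=0$ even when the graded pieces are infinite-dimensional. This is a naturality and bookkeeping argument rather than a hard computation: restricting the explicit $\Upsilon$-projective resolution of $k$ built above along $i_\Lambda$ splits off the standard $\Lambda$-resolution of $k$, using the bimodule splitting $\Upsilon=i_\Lambda(\Lambda)\oplus C_\Lambda$ in which $C_\Lambda$ is the span of words ending in a $\Gamma$-letter, and the comparison chain map realizing $E(i_\Lambda)$ is exactly the resulting projection. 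Tracking this compatibility through the connecting maps and the adjunction isomorphism is the one place that demands care; once it is in place, $\theta$ is a bijective morphism in $\mathcal A$ and the theorem follows.
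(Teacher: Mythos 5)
Your proposal is correct and follows essentially the same route as the paper's proof: the decomposition of $I(\Lambda\sqcup\Gamma)$ into the modules induced from $I(\Lambda)$ and $I(\Gamma)$, dimension shifting together with Eckmann--Shapiro (using that $\Lambda\sqcup\Gamma$ is free over each factor) to get the linear isomorphism $E^n(\Lambda\sqcup\Gamma)\cong E^n(\Lambda)\oplus E^n(\Gamma)$, and finally the identification of that isomorphism with the canonical morphism $E(i_\Lambda)*E(i_\Gamma)$ via the explicit $\Lambda\sqcup\Gamma$-projective resolution whose positive-degree part is the sum of the induced resolutions. Your split-epimorphism argument via the retractions $r_\Lambda,r_\Gamma$ is an extra observation the paper does not use, and your closing identification step is sketched at roughly the same level of detail as the paper's own; the only slip is that the $\Lambda$-bimodule complement $C_\Lambda$ of $i_\Lambda(\Lambda)$ in $\Lambda\sqcup\Gamma$ is the span of all words containing at least one $\Gamma$-letter, not of words ending in one (the latter is not stable under right multiplication by $\Lambda$).
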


\begin{proof}
 Notice that 
\begin{equation} \label{omega1} I(\Lambda \sqcup \Gamma) \cong I(\Lambda) \!\!\uparrow ^{\Lambda \sqcup \Gamma} \oplus I(\Gamma ) \!\!\uparrow ^{\Lambda \sqcup \Gamma} \end{equation}
 and also that $\Lambda \sqcup \Gamma$ is free on restriction to $\Lambda$ and $\Gamma$, regarded as subalgebras via the canonical injections $i_\Lambda$ and $i_\Gamma$. We can now compute $E(\Lambda \sqcup \Gamma)$ by dimension shifting:
\begin{align*}
 \ext^n_{\Lambda \sqcup \Gamma}(k,k) &\cong \ext ^{n-1}_{\Lambda \sqcup \Gamma} (I(\Lambda \sqcup \Gamma), k) \\
 &\cong \ext ^{n-1} _{\Lambda \sqcup \Gamma} (I(\Lambda) \!\!\uparrow^{\Lambda \sqcup \Gamma}, k) \oplus \ext ^{n-1} _{\Lambda \sqcup \Gamma} (I(\Gamma) \!\!\uparrow^{\Lambda \sqcup \Gamma}, k) \\
&\cong \ext ^{n-1}_{\Lambda}(I(\Lambda), k) \oplus \ext ^{n-1}_{\Gamma}(I(\Gamma), k) \\
&\cong \ext ^{n}_{\Lambda}(k,k) \oplus \ext ^{n}_{\Gamma}(k,k)
\end{align*}
for $n>0$, so that $\ext ^n _{\Lambda \sqcup \Gamma} (k,k)$ and $\ext^n _\Lambda (k,k) * \ext ^n_\Gamma (k,k)$ agree as vector spaces for all $n\geq 0$.
Take projective resolutions $P_*$ and $Q_*$ of $k$ as $\Lambda$ and $\Gamma$ modules respectively.  Because of (\ref{omega1}), we know there is a projective resolution of $k$ as a $\Lambda\sqcup\Gamma$ module which is $P_*\uparrow^{\Lambda\sqcup\Gamma} \oplus Q_*\uparrow^{\Lambda\sqcup\Gamma}$ in degrees greater than zero.  It follows that the linear isomorphism $E(\Lambda\sqcup\Gamma) \to E(\Lambda)*E(\Gamma)$  can be realised as $E(i_\Lambda)* E(i_\Gamma)$, the map arising from the universal property of the product,  and is therefore an homomorphism of algebras.
\end{proof}

For Hochschild cohomology we can procede similarly, but the version of (\ref{omega1}) required is slightly harder.  Given an augmented algebra $A$ write $\Omega _{A^e}$ for the kernel of the bimodule map $d_0: A^e \to A$ defined by $a_1 \otimes a_2 \mapsto a_1a_2$.  

\begin{lem} \label{omega_lem}  $\Omega  _{(\Lambda \sqcup \Gamma)^e}  \cong \Omega _{\Lambda ^e}  \!\!\uparrow ^{(\Lambda \sqcup \Gamma)^e} \oplus \Omega _{\Gamma ^e}  \!\!\uparrow ^{(\Lambda \sqcup \Gamma)^e}$ \end{lem}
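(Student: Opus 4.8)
Write $A=\Lambda\sqcup\Gamma$, let $\pi\colon A\otimes_k A\to A$ be the multiplication map (so $\Omega_{A^e}=\ker\pi$), and let $\pi_\Lambda\colon A\otimes_k A\to A\otimes_\Lambda A$ and $\pi_\Gamma\colon A\otimes_k A\to A\otimes_\Gamma A$ be the canonical projections. The plan is to realise both induced modules as the kernels of $\pi_\Lambda$ and $\pi_\Gamma$, to check that these are submodules of $\Omega_{A^e}$, and then to show they give an internal direct sum decomposition of $\Omega_{A^e}$.

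First I would set up the identification. As noted in the proof of Theorem \ref{ordinary_coprod}, $A$ is free on restriction to $\Lambda$, on either side, so $A^e=A\otimes_k A^{\op}$ is free, hence flat, as a right $\Lambda^e$-module and the induction functor $A^e\otimes_{\Lambda^e}(-)$ is exact. Applying it to the defining sequence $0\to\Omega_{\Lambda^e}\to\Lambda^e\to\Lambda\to0$, and using the identifications $A^e\otimes_{\Lambda^e}\Lambda^e\cong A\otimes_k A$ and $A^e\otimes_{\Lambda^e}\Lambda\cong A\otimes_\Lambda A$, yields a short exact sequence $0\to\Omega_{\Lambda^e}\uparrow^{A^e}\to A\otimes_k A\xrightarrow{\pi_\Lambda}A\otimes_\Lambda A\to0$, and likewise for $\Gamma$. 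Thus $\Omega_{\Lambda^e}\uparrow^{A^e}\cong\ker\pi_\Lambda$ and $\Omega_{\Gamma^e}\uparrow^{A^e}\cong\ker\pi_\Gamma$; concretely $\ker\pi_\Lambda=A\cdot\delta(I(\Lambda))\cdot A$, where $\delta(a)=a\otimes1-1\otimes a$. Since $\pi$ factors as $m_\Lambda\circ\pi_\Lambda=m_\Gamma\circ\pi_\Gamma$ through the multiplications $m_\Lambda\colon A\otimes_\Lambda A\to A$ and $m_\Gamma\colon A\otimes_\Gamma A\to A$, both kernels lie inside $\Omega_{A^e}=\ker\pi$, so it remains to prove $\Omega_{A^e}=\ker\pi_\Lambda\oplus\ker\pi_\Gamma$.

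For the sum I would use that $\Omega_{A^e}$ is generated as a left $A$-module by the differentials $\delta(a)$, together with the Leibniz rule $\delta(uv)=u\cdot\delta(v)+\delta(u)\cdot v$ for the bimodule action. Because $\delta$ vanishes on $k$ and $A=\bigoplus_n R_n$ is spanned by reduced words in the letters of $I(\Lambda)$ and $I(\Gamma)$, repeated application of Leibniz expresses any $\delta(u_1\otimes\cdots\otimes u_n)$ as a bimodule combination of the $\delta(u_j)$, each of which lies in $\ker\pi_\Lambda$ or $\ker\pi_\Gamma$ according to the type of the letter $u_j$. Hence $\ker\pi_\Lambda+\ker\pi_\Gamma=\Omega_{A^e}$.

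The remaining and main point is directness, namely $\ker\pi_\Lambda\cap\ker\pi_\Gamma=0$, equivalently the injectivity of $\psi=(\pi_\Lambda,\pi_\Gamma)\colon A\otimes_k A\to(A\otimes_\Lambda A)\oplus(A\otimes_\Gamma A)$. Here I would exploit the reduced-word structure of the free product: a right $\Lambda$-basis of $A$ is $1$ together with the reduced words ending in an $I(\Gamma)$-letter, which puts every element of $A\otimes_\Lambda A$ into a unique normal form $\sum_i w_i\otimes_\Lambda a_i$, and symmetrically for $A\otimes_\Gamma A$. Tracking a tensor through both normal forms, the incompatibility of the two bases on reduced words forces any element killed by both $\pi_\Lambda$ and $\pi_\Gamma$ to vanish. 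This bookkeeping with reduced words is the only real obstacle; once directness is established the decomposition $\Omega_{A^e}\cong\Omega_{\Lambda^e}\uparrow^{A^e}\oplus\Omega_{\Gamma^e}\uparrow^{A^e}$ is immediate, and it is an isomorphism of $A^e$-modules because every map involved is a bimodule map.
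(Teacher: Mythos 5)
Your framework is sound, and in places cleaner than the paper's own setup. Since $\Lambda\sqcup\Gamma$ is free as a left and as a right $\Lambda$-module, $(\Lambda\sqcup\Gamma)^e$ is indeed free, hence flat, as a right $\Lambda^e$-module, so inducing $0\to\Omega_{\Lambda^e}\to\Lambda^e\to\Lambda\to0$ identifies $\Omega_{\Lambda^e}\!\uparrow^{(\Lambda\sqcup\Gamma)^e}$ with $\ker\pi_\Lambda=A\cdot\delta(I(\Lambda))\cdot A$, which is the sub-bimodule the paper calls $O_\Lambda$; the paper essentially asserts this identification, so this part of your argument is a genuine improvement. The Leibniz-rule argument that $\ker\pi_\Lambda+\ker\pi_\Gamma=\Omega_{(\Lambda\sqcup\Gamma)^e}$ is also correct and matches the paper's generation step.

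The gap is the directness claim $\ker\pi_\Lambda\cap\ker\pi_\Gamma=0$. This is the entire mathematical content of the lemma, and you do not prove it: ``tracking a tensor through both normal forms'' and ``the incompatibility of the two bases'' is an assertion of the conclusion, not an argument. To see that it is not automatic, write an element $u$ of the intersection in the two normal forms your bases provide. In the $\Lambda$-form, $u=\sum c_{v,w}\bigl(v\otimes w-\sigma_\Lambda\pi_\Lambda(v\otimes w)\bigr)$, where $v$ runs over reduced words ending in a $\Lambda$-letter and $\sigma_\Lambda$ is the $k$-linear section determined by your right $\Lambda$-basis; the correction terms $\sigma_\Lambda\pi_\Lambda(v\otimes w)$ are supported exactly on tensors $v'\otimes w'$ with $v'$ ending in a $\Gamma$-letter or $v'=1$, which is precisely where the leading terms of the $\Gamma$-form live, and vice versa. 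Equating coefficients of the two expansions therefore yields a circular system: each $c_{v,w}$ is expressed in terms of coefficients $d_{v\gamma,t}$ of the $\Gamma$-form, each of those in terms of coefficients $c_{v\gamma\lambda,s}$, and so on with strictly increasing word length. Concluding that all coefficients vanish requires an extremal argument (finiteness of the support of $u$ together with a maximal- or minimal-length term), and this is exactly what the paper's proof supplies: it lists the fourteen types of spanning elements of $O_\Lambda$ and $O_\Gamma$, shows that only types (\ref{2}) and (\ref{3}) can produce a nontrivial intersection, and then derives a contradiction by choosing a term whose left-hand tensor factor has minimal length. Your normal-form setup would support an analogous argument (take a nonzero $c_{v,w}$ with $v$ of maximal length), but until that step is written out the proof is incomplete at precisely the point you yourself identify as the only real obstacle.
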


\begin{proof} 
If $A$ is an augmented algebra then $\Omega _{A^e}$ is generated as an $A$-$A$ bimodule by elements of the form $a \otimes 1 - 1 \otimes a$ for $a \in A$, by \cite[IX, Proposition 3.1]{CE}.  In fact if $S$ is a generating set for $A$ as an algebra then the elements $s \otimes 1 - 1 \otimes s$ for $s \in S$ generate $\Omega _{A^e}$ as a bimodule.  Now $\Omega _{(\Lambda \sqcup \Gamma)^e}$ contains copies $O_\Lambda$ and $O_\Gamma$ of $\Omega_{\Lambda^e}\!\!\uparrow ^{(\Lambda \sqcup \Gamma)^e}$ and $\Omega_{\Gamma^e}\!\!\uparrow ^{(\Lambda \sqcup \Gamma)^e}$ consisting of the submodules  generated by elements of the form 
\[ 1\otimes \lambda - \lambda \otimes 1 \,\,\,\mathrm{and}\,\,\, 1 \otimes \gamma - \gamma \otimes 1
 \]
respectively, where $\lambda \in I(\Lambda)$ and $\gamma \in I(\Gamma)$.  Between them these generate $\Omega _{(\Lambda \sqcup \Gamma)^e}$, so our task is to show that they intersect trivially. 

Fix bases $\{ \lambda_i : i \in I \}$ of $I(\Lambda)$ and $\{ \gamma_j : j \in J \}$ of $I(\Gamma)$.  Then $\Omega _{(\Lambda \sqcup \Gamma)^e}$ is linearly spanned by elements of the form
\begin{eqnarray}
 \label{1} \cdots \lambda_i \otimes \lambda_j \gamma_k \cdots & - & \cdots \lambda_i \lambda_j \otimes \gamma_k \cdots \\ \label{2}
 \cdots \lambda_i \otimes \gamma_j \lambda_k \cdots & - & \cdots \lambda_i \gamma_j \otimes \lambda_k \cdots \\ \label{3}
\cdots \gamma_i \otimes \lambda_j \gamma_k \cdots & - & \cdots \gamma_i \lambda_j \otimes \gamma_k \cdots \\\label{4}
\cdots \gamma_i \otimes \gamma_j \lambda_k \cdots & - & \cdots \gamma_i \gamma_j \otimes \lambda_k \cdots \\\label{5}
\cdots \gamma_i \lambda_j \otimes \lambda_k \cdots &-& \cdots\gamma_i \otimes \lambda_j \lambda_k \cdots \\\label{6}
\cdots \lambda_i \gamma_j \otimes \gamma_k \cdots &-& \cdots\lambda_i \otimes \gamma_j \gamma_k \cdots \\\label{7}
1 \otimes \lambda_i \gamma_j \cdots&-& \lambda_i \otimes \gamma_j \cdots \\\label{8}
1 \otimes \gamma_i \lambda_j \cdots &-& \gamma_i\otimes  \lambda_j \cdots \\\label{9}
1 \otimes \lambda_i \lambda_i \cdots &-& \lambda_i \otimes \lambda_j \cdots \\\label{10}
1 \otimes \gamma_i \gamma_i \cdots &-& \gamma_i \otimes \gamma_j \cdots \\\label{11}
\cdots \gamma_i \otimes \lambda_j &-& \cdots \gamma_i \lambda_j \otimes 1 \\\label{12}
\cdots \lambda_i \otimes \gamma_j &-& \cdots \lambda_i \gamma_j \otimes 1 \\\label{13}
\cdots \lambda_i \otimes \lambda_j &-& \cdots \lambda_i \lambda_j \otimes 1 \\ \label{14}
\cdots \gamma_i \otimes \gamma_j &-& \cdots \gamma_i \gamma_j \otimes 1 
\end{eqnarray}
$O_\Lambda$ is spanned by elements of types (\ref{1}),  (\ref{3}),  (\ref{5}),  (\ref{7}),  (\ref{9}),  (\ref{11}),  and (\ref{13}), while $O_\Gamma$ is spanned by elements of types (\ref{2}),  (\ref{4}),  (\ref{6}),  (\ref{8}),  (\ref{10}),  (\ref{12}),  and (\ref{14}).  Many of these terms cannot appear with non-zero coefficient in $W$: for example no term of the form (\ref{1}) can appear, because $O_\Gamma$ contains no tensor of the form $ \cdots \lambda_i \otimes \lambda_j  \cdots$.  This type of argument shows that the only way $O_\Lambda \cap O_\Gamma$ can be non-zero is if there is a non-trivial linear dependence between elements of types (\ref{2}) and (\ref{3}).  Suppose such a linear dependence of the form $W=V$ exists, where $0 \neq W \in O_\Lambda$ and $V \in O_\Gamma$.  Suppose 
$\cdots \lambda_l \gamma_i \otimes \lambda_j \gamma_k \cdots  -  \cdots \lambda_l \gamma_i \lambda_j \otimes \gamma_k \cdots$ appears with non-zero coefficient in $W$, and is chosen so that the first term $\cdots \lambda_l \gamma_i \otimes \lambda_j \gamma_k \cdots$ has its left hand factor $\cdots \lambda_l \gamma_i$ of minimal length, where the length of a word $w_1...w_n$ alternating between $\lambda_i$s and $\gamma_j$s is $n$. Now $\cdots \lambda_l \gamma_i \otimes \lambda_j \gamma_k \cdots$ must also appear in $V$ --- the only way it can do so is as an element
\[\cdots \lambda_l \gamma_i \otimes \lambda_j \gamma_k \cdots - \cdots \lambda_l \otimes \gamma_i \lambda_j \gamma_k \cdots\] 
appears in $V$ with non-zero coefficient.  Now the second term must appear in $W$, but its left-hand factor is shorter than $\cdots \lambda_l \gamma_i \otimes \lambda_j \gamma_k \cdots$.  This is a contradiction.
\end{proof}

Now we can compute $\HH(\Lambda \sqcup \Gamma)$ by dimension-shifting.  If $n>1$
\begin{align}
\nonumber \ext^n_{(\Lambda \sqcup \Gamma)^e}(\Lambda \sqcup \Gamma,\Lambda \sqcup \Gamma) &= \ext ^{n-1}_{(\Lambda \sqcup \Gamma)^e} (\Omega_{( \Lambda \sqcup \Gamma)^e}, \Lambda \sqcup \Gamma) \\ \nonumber
&= \ext ^{n-1} _{(\Lambda \sqcup \Gamma)^e} (\Omega_{\Lambda^e} \!\!\uparrow^{(\Lambda \sqcup \Gamma)^e}, \Lambda \sqcup \Gamma) \oplus \ext ^{n-1} _{(\Lambda \sqcup \Gamma)^e} (\Omega_{\Gamma^e} \!\!\uparrow^{(\Lambda \sqcup \Gamma)^e}, \Lambda \sqcup \Gamma) \\ \nonumber
&= \ext ^{n-1}_{\Lambda^e}(\Omega_{\Lambda^e} , {\Lambda \sqcup \Gamma}) \oplus \ext ^{n-1}_{\Gamma^e}(\Omega_{\Gamma^e} , {\Lambda \sqcup \Gamma}) \\\label{lastline}
&= \ext ^{n}_{\Lambda^e}(\Lambda, {\Lambda \sqcup \Gamma}) \oplus \ext ^{n}_{\Gamma^e}(\Gamma, {\Lambda \sqcup \Gamma})
\end{align}
using that $(\Lambda \sqcup \Gamma)^e$ is free on restriction to $\Lambda^e$ and $\Gamma^e$. We can now describe the structure of $\HH(\Lambda \sqcup \Gamma)$. 

\begin{theo} \label{hoch_coprod}
If $\Lambda^e$ and $\Gamma^e$ are self-injective then
\[ \HH^n(\Lambda \sqcup \Gamma) \cong \HH^n(\Lambda) \oplus \HH^n(\Gamma) \]
for $n>1$, and there is a direct sum decomposition \[\HH^1(\Lambda \sqcup \Gamma) \cong \HH^1(\Lambda) \oplus \HH^1 (\Gamma) \oplus K. \]  If $z_1, z_2 \in Z(\Lambda \sqcup \Gamma) \cap I(\Lambda \sqcup \Gamma)$, $k_1, k_2 \in K$ and $l_1, g_1$ and $l_2,g_2$ lie in the copies of $\HH^{\geq 1} (\Lambda)$ and $\HH^{\geq 1} (\Gamma)$ embedded in $\HH (\Lambda \sqcup \Gamma)$, then 
\[ (z_1 + k_1 +l_1+g_1) ( z_2 + k_2 +l_2+g_2)=(z_1z_2 + z_1 k_2 +z_2k_1 + l_1l_2 + g_1g_2) \]
where the multiplications $l_1l_2$ and $g_1g_2$ are carried out in $\HH(\Lambda)$ and $\HH(\Gamma)$.
\end{theo}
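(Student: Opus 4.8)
The plan is to push the dimension-shift that produced \eqref{lastline} one step further by analysing the coefficient bimodule $\Lambda\sqcup\Gamma$, and then to read the ring structure off a resolution that is manifestly assembled from the two factors. The module-theoretic input is that, as a $\Lambda$-$\Lambda$ bimodule, $\Lambda\sqcup\Gamma$ splits as $i_\Lambda(\Lambda)\oplus C_\Lambda$, where $C_\Lambda$ is spanned by all alternating words of length at least one except the length-one words lying in $I(\Lambda)$. Using that a word beginning (resp.\ ending) with a letter of $I(\Lambda)$ is the left (resp.\ right) multiple, by that letter, of a shorter word beginning (resp.\ ending) with a letter of $I(\Gamma)$, I would identify $C_\Lambda\cong\Lambda\otimes_k V_\Lambda\otimes_k\Lambda$, where $V_\Lambda$ is the span of the words starting and ending in $I(\Gamma)$; thus $C_\Lambda$ is free over $\Lambda^e$, and symmetrically $\Lambda\sqcup\Gamma\cong\Gamma\oplus C_\Gamma$ with $C_\Gamma$ free over $\Gamma^e$. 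Self-injectivity of $\Lambda^e$ gives $\ext^{>0}_{\Lambda^e}(\Lambda,\Lambda^e)=0$, so $\ext^{>0}_{\Lambda^e}(\Lambda,C_\Lambda)=0$ as $C_\Lambda$ is free; substituting $\Lambda\sqcup\Gamma=\Lambda\oplus C_\Lambda$ and its $\Gamma$-analogue into \eqref{lastline} collapses the right-hand side to $\HH^n(\Lambda)\oplus\HH^n(\Gamma)$ for every $n>1$. Degree one lies outside the range of \eqref{lastline}, so there I would use instead the tail of the long exact sequence of $0\to\Omega_{(\Lambda\sqcup\Gamma)^e}\to(\Lambda\sqcup\Gamma)^e\to\Lambda\sqcup\Gamma\to0$ together with Lemma~\ref{omega_lem}: the coefficient inclusions $\Lambda,\Gamma\hookrightarrow\Lambda\sqcup\Gamma$ produce the embedded copies of $\HH^1(\Lambda)$ and $\HH^1(\Gamma)$, and $K$ is defined to be a complement to their sum.

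For the multiplicative statement I would fix one projective resolution of $\Lambda\sqcup\Gamma$ over $(\Lambda\sqcup\Gamma)^e$ displaying the factors: starting from $\Lambda^e$- and $\Gamma^e$-resolutions $Q^\Lambda_*$ and $Q^\Gamma_*$ of $\Lambda$ and $\Gamma$, Lemma~\ref{omega_lem} and exactness of induction give a resolution $P_*$ with $P_0=(\Lambda\sqcup\Gamma)^e$ and $P_n=Q^\Lambda_n\!\!\uparrow\oplus\,Q^\Gamma_n\!\!\uparrow$ for $n\geq1$, with block-diagonal differential in degrees at least two. Through the adjunction $\Hom_{(\Lambda\sqcup\Gamma)^e}(M\!\!\uparrow,N)=\Hom_{\Lambda^e}(M,N)$ and the coefficient splitting above, a class in the embedded $\HH^{\geq1}(\Lambda)$ is represented by a cocycle vanishing on the $Q^\Gamma\!\!\uparrow$ summand and taking values in $i_\Lambda(\Lambda)$, dually for $\HH^{\geq1}(\Gamma)$, while a class in $K$ is represented by a cocycle valued in the free complement $C_\Lambda$ or $C_\Gamma$. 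The Yoneda product is computed by lifting one factor to a chain endomorphism and composing with the other; the central observation is that an $\HH^{\geq1}(\Lambda)$-class may be lifted to a chain map which, in positive degrees, is the induction of a lift over $\Lambda^e$ on the $Q^\Lambda\!\!\uparrow$ blocks and zero on the $Q^\Gamma\!\!\uparrow$ blocks, and dually for $\Gamma$.

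Granting compatible lifts, each vanishing in the product formula reduces either to an incompatibility of supports or to the vanishing $\ext^{>0}_{\Lambda^e}(\Lambda,C_\Lambda)=0$. For $l_1,l_2,g_1,g_2$ the product lands in degree at least two, so only positive-degree, block-diagonal pieces of the lifts intervene: a $\Lambda$-supported lift followed by a $\Gamma$-supported cocycle is identically zero, giving $l_1g_2=g_1l_2=0$, whereas $l_1l_2$ and $g_1g_2$ are computed entirely within $\Hom_{\Lambda^e}(Q^\Lambda_*,\Lambda)$ and $\Hom_{\Gamma^e}(Q^\Gamma_*,\Gamma)$ and so reproduce the products of $\HH(\Lambda)$ and $\HH(\Gamma)$. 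A product involving a $K$-class again lands in degree at least two and, because one of the composed cochains is valued in $C_\Lambda$ (or $C_\Gamma$), represents a class in $\ext^{>0}_{\Lambda^e}(\Lambda,C_\Lambda)=0$ (or its $\Gamma$-analogue); this disposes of $k_1k_2$, $k_1l_2$, $k_1g_2$ and their opposites. For $z\in Z(\Lambda\sqcup\Gamma)\cap I(\Lambda\sqcup\Gamma)$ the product $z\cdot l$ is post-multiplication of a $\Lambda$-supported cocycle by $z$; here I would establish the structural fact that a central element of $I(\Lambda\sqcup\Gamma)$ has no length-one component in $I(\Lambda)$ or $I(\Gamma)$ --- matching the $I(\Lambda)\otimes I(\Gamma)$ and $I(\Gamma)\otimes I(\Lambda)$ parts of $z\gamma$ and $\gamma z$ forces this --- so that $z$ lies in $C_\Lambda$, the values $z\cdot i_\Lambda(\Lambda)$ stay in $C_\Lambda$, and the class dies in $\ext^{>0}_{\Lambda^e}(\Lambda,C_\Lambda)$. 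This gives $z_1l_2=z_1g_2=z_2l_1=z_2g_1=0$; meanwhile $z_1z_2$ is the product in $Z(\Lambda\sqcup\Gamma)=\HH^0$ and the surviving mixed terms $z_1k_2,z_2k_1$ are simply retained.

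The hard part is the interface between degrees zero and one, where $P_0$ does not split as a direct sum and the block-diagonal description of the lifts breaks down. Any product whose output or input sits in degree one must be lifted through $P_0$, so I must check that the support conventions can be arranged consistently at the bottom of $P_*$, and in particular that $K$ may be chosen so that the stated formula is independent of the complement. It is this low-degree bookkeeping, rather than the high-degree vanishing, where the substance of the proof lies.
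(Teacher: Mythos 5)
Your additive decomposition and your high-degree multiplicative analysis follow the same route as the paper: the coefficient splitting of $\Lambda\sqcup\Gamma$ over $\Lambda^e$ as $\Lambda\oplus C_\Lambda$ with $C_\Lambda$ free (you even supply the word-length justification of freeness that the paper leaves implicit), self-injectivity to kill $\ext^{>0}_{\Lambda^e}(\Lambda,C_\Lambda)$, the degree-one dimension shift via Lemma \ref{omega_lem} to produce $K$, and block-diagonal lifts on the resolution induced from the factors to obtain $l_1g_2=0$, to compute $l_1l_2$ and $g_1g_2$ inside $\HH(\Lambda)$ and $\HH(\Gamma)$, and to kill products involving $K$. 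All of this is consistent with the paper's (much terser) proof.

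However, the step that disposes of the terms $z_il_j$ and $z_ig_j$ rests on a structural claim that is false: it is not true that a central element of $I(\Lambda\sqcup\Gamma)$ has no length-one component. Take $k$ of characteristic not two, $\Lambda=k[x]/x^2$ and $\Gamma=k\times k=k[e]/(e^2-e)$ with $\eps(e)=0$; both $\Lambda^e$ and $\Gamma^e$ are self-injective, so this lies within the theorem's hypotheses. The element $z=x-xe-ex$ satisfies $zx=-xex=xz$ and $ze=-exe=ez$, so $z$ is central, lies in $I(\Lambda\sqcup\Gamma)$, and has nonzero length-one component $x$. The flaw in your matching argument is that the $I(\Lambda)\otimes I(\Gamma)$-component of $z\gamma$ is not $z_\Lambda\otimes\gamma$ alone but $z_\Lambda\otimes\gamma+\sum_i\lambda_i\otimes\gamma_i\gamma$, where $\sum_i\lambda_i\otimes\gamma_i$ is the length-two part of $z$; since $I(\Gamma)$ is an ideal, the products $\gamma_i\gamma$ have no scalar part, and these extra terms can cancel $z_\Lambda\otimes\gamma$ precisely when some $u\in 1+I(\Gamma)$ annihilates $I(\Gamma)$ (here $u=1-e$), i.e.\ when $\Gamma$ has $k$ as a direct algebra factor realising the augmentation. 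Your claim does hold when $I(\Lambda)$ and $I(\Gamma)$ are nilpotent, but the theorem's hypotheses also admit, for instance, semisimple factors, where it fails. What survives of your computation is only that $z\cdot l$ equals the image in $\HH(\Lambda\sqcup\Gamma)$ of $q_\Lambda(z)\cdot l$ computed in $\HH(\Lambda)$, where $q_\Lambda:\Lambda\sqcup\Gamma\to\Lambda$ is the algebra map killing every word that contains a letter from $I(\Gamma)$; finishing your argument would require the separate, and not obvious, fact that any $q_\Lambda(z)$ arising from a central $z$ annihilates $\HH^{\geq1}(\Lambda)$ --- true in the example above only because $x\cdot\HH^{\geq 1}(k[x]/x^2)=0$. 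Combined with the degree-zero/one interface that you yourself flag as unresolved, the multiplicative part of your proposal is genuinely incomplete.
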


\begin{proof}
$\Lambda \sqcup \Gamma$ restricted to $\Lambda^e$ is $\Lambda \oplus F$ where $F$ is free, hence injective under our hypotheses, so the first statement follows from (\ref{lastline}).
If $n=1$ the dimension-shifting formula is slightly different: 
 \[ \ext^1_{(\Lambda \sqcup \Gamma)^e}(\Lambda \sqcup \Gamma,\Lambda \sqcup \Gamma) \cong \frac{\Hom_{(\Lambda \sqcup \Gamma)^e} (\Omega  _{(\Lambda \sqcup \Gamma)^e} , \Lambda \sqcup \Gamma)}{F}\]
where $F$ is the space of homomorphisms factoring through $\Omega_{(\Lambda \sqcup \Gamma)^e}  \hookrightarrow (\Lambda \sqcup \Gamma)^e$.  This is spanned by  maps  $f_w: \Omega  _{(\Lambda \sqcup \Gamma)^e} \to \Lambda \sqcup \Gamma$ for $w \in \Lambda \sqcup \Gamma$ given by 
\[f_w (x \otimes 1 - 1 \otimes x ) = x w - w x\] for $x \in I(\Lambda)\cup I(\Gamma)$. Using Lemma \ref{omega_lem} and Frobenius reciprocity we  have 
\[ \HH^1(\Lambda \sqcup \Gamma) \cong \frac{\Hom_{\Lambda ^e}(\Omega_{\Lambda^e},\Lambda) \oplus \Hom_{\Gamma ^e}(\Omega_{\Gamma^e},\Gamma)\oplus \Hom_{\Lambda^e}(\Omega_{\Lambda^e}, X)\oplus \Hom_{\Gamma^e}(\Omega_{\Gamma^e}, Y)}{F}\]
where $X$ and $Y$ are free $\Lambda^e$ and $\Gamma^e$ modules respectively.  Mapping the last two summands to zero gives a surjection of $k$-vector spaces $\HH^1(\Lambda \sqcup \Gamma) \twoheadrightarrow \HH^1(\Lambda) \oplus \HH^1(\Gamma)$, and we take $K$ to be the kernel of this map.
The formula for products in $\HH(\Lambda \sqcup \Gamma)$ follows from the existence of a projective resolution analogous to the one in the proof of Theorem \ref{ordinary_coprod}.
\end{proof}

\begin{ex}
Let $\Lambda=k[x]/x^2$ and $\Gamma=k[y]/y^2$, where $k$ has characteristic not two.  $\HH(\Lambda)$ is the graded commutative ring $k[x_0, x_1, x_2]/(x_0^2, x_1^2, x_0 x_1, x_0 x_2)$, where degrees are given by subscripts. 
We have $Z(\Lambda * \Gamma)=k[xy+yx]$, and by Theorem \ref{hoch_coprod} the algebra structure of $\HH(\Lambda * \Gamma)$ is given by 
\[ \frac{\HH(\Lambda)}{(x_0)}  * \frac{\HH(\Gamma)}{(y_0)} \oplus Z(\Lambda * \Gamma) \cap I(\Lambda * \Gamma) \oplus K \]
where the augmentations on $\HH(\Lambda)/(x_0)$ and $\HH(\Gamma)/(y_0)$ kill elements of positive degree, and $K$ is the quotient of $(\Lambda * \Gamma) \oplus (\Lambda * \Gamma)$ given by
\[
\frac{\langle 
(xyx,0), (xyxyx,0), (xyxyxyx,0)\ldots \rangle +  \langle 
 (0,yxy), (0,yxyxy),(0,yxyxyxy)\ldots \rangle}{\langle  (xyx\cdots yx, -yxy\cdots xy) \rangle} 
\]
regarded as a subspace of $\HH^1(\Lambda * \Gamma)$.
%
%
\end{ex}

\end{section}


\begin{section}{Cohomology of products} \label{product_section}

In this section we proceed as follows: in \ref{bimod_res_subsection} we use bimodule resolutions for $\Lambda$ and $\Gamma$  to build a bimodule resolution of $\Lambda * \Gamma$.  In \ref{ord_cohom_prod} we
use this resolution to study the ordinary cohomology $E(\Lambda * \Gamma)$ by applying the functor $- \otimes _{\Lambda * \Gamma} k$, and prove in Theorem \ref{main_theo} that $E(\Lambda * \Gamma) = E(\Lambda) \sqcup E(\Gamma)$, so long as $\Lambda$ and $\Gamma$ are either finite-dimensional or finitely generated and \textbf{graded connected}, that is, graded with zero degree part equal to $k$. Finally in \ref{hoch_cohom_prod} we return to our bimodule resolution to investigate Hochschild cohomology of products, showing how $\HH(\Lambda * \Gamma)$ decomposes, modulo a certain ideal, into a direct sum of terms coming from the cohomology of $\Lambda$ and $\Gamma$.

The reason for the restriction to finite-dimensional or finitely generated graded connected algebras is that we require the existence of a bimodule resolution $(P_*,d)$ of $\Lambda$ with the property that $\im d \subseteq I(\Lambda) \cdot P_* + P_* \cdot I(\Lambda)$ (and a similar resolution for $\Gamma$).  Such resolutions exist given our restriction on $\Lambda$ and $\Gamma$: if they are finite dimensional then a minimal projective resolution exists which certainly has this property. On the other hand if $\Lambda$ is graded with $\Lambda_0=k$ and and $M$ is a finitely generated graded $\Lambda$ module with $\{m_1\ldots m_r \}$ a minimal homogeneous generating set then the kernel of
the map $\bigoplus \Lambda e_i \to M$ given by $e_i \mapsto m_i$ is concentrated in positive degree.  To see this, let $\sum \lambda_i m_i=0$ and suppose that $\lambda_1$ has a non-zero degree zero component, which we may take to be $1$. Let the degree of $m_1$ be $N$ and project onto the degree $N$ component of $\sum \lambda_i m_i=0$. We get $m_1 + \sum _{i>1} [\lambda_i]_{N-\operatorname{deg}(m_i)} m_i  =0$ where $[\lambda]_m$ denotes the degree $m$ component of $\lambda\in\Lambda$.  It follows that $m_1$ was not essential in the generating set.  Using this repeatedly we can build a projective resolution with the required property. In general however, no such resolution exists as the following example shows.

\begin{ex}
Let $\Lambda = k[x,y]/(x^2+y^2-x)$, augmented by $\eps(x)=\eps(y)=0$. Then $I(\Lambda) = (x,y)$ is a principal ideal if and only if $k$ contains a square root $i$ of $-1$, in which case $I(\Lambda)=(y+ix)$ and there is a projective resolution
\[
0 \to \Lambda \stackrel{y+ix}{\to} \Lambda \stackrel \eps \to k \to 0. 
\]
If $k$ has no square root of $-1$, any projective resolution 
\[
\cdots \to Q \stackrel {d_1} \to \Lambda \to k \to 0 
\]
has at least two summands at degree one ($\Lambda$ is indecomposable as a left module over itself as it is an integral domain, so the only finitely generated projectives are free).  We claim there is no projective resolution in which $\ker d_1 \subseteq I(\Lambda) \cdot Q$.  If there were, $\ext ^1_\Lambda (k,k)$ would be isomorphic to $\Hom_\Lambda (Q, k)$ which would have dimension strictly greater than one, whereas $\ext ^1_\Lambda (k,k) \cong \Hom _\Lambda (I(\Lambda), k)$ which has dimension one.
It now follows there is no bimodule resolution $(P_*,d)$ of $\Lambda$ with the property that $\im d \subseteq I(\Lambda) \cdot P_* + P_* \cdot I(\Lambda)$, for if there were we could obtain a projective resolution $(Q_*, \delta)$ of $k$ in which $\ker \delta_1 \subseteq I(\Lambda) \cdot Q$ by applying $- \otimes _\Lambda k$.

A complete projective resolution of $k$ in the case where $k$ has no square root of $-1$ is given by letting every term after the zeroth be $\Lambda \oplus \Lambda$ with differentials alternating between 
\[
\left( \begin{array}{cc} y & x \\ 1-x & y \end{array} \right) \,\,\, \mathrm{and} \,\,\, \left( \begin{array}{cc} -y & 1-x \\ x & y \end{array} \right).
\]
\end{ex}

\begin{subsection}{Bimodule resolutions of products} \label{bimod_res_subsection}

Let $\Lambda$ and $\Gamma$ be augmented algebras such that there exist bimodule resolutions $(P_*, d^P)$ and $(Q_*,d^Q)$  of $\Lambda$ and $\Gamma$ with $\im d^P \subset I(\Lambda) \cdot P_*+P_* \cdot I(\Lambda)$ and $\im d^Q \subset I(\Gamma) \cdot Q_*+Q_*\cdot I(\Gamma)$.  Furthermore assume $P_0=\Lambda ^e$ and $Q_0 = \Gamma ^e$.  Our goal in this section is to produce a bimodule resolution $(P \sqcup Q, \delta)$ for $\Lambda * \Gamma$.

We use a bar to denote an induction functor $\uparrow ^{(\Lambda * \Gamma)^e}$, and will write $d$ instead of $d^P$ or $d^Q$ --- which subalgebra we induce from, and which differential we are applying, should be clear by the context. Given bimodules $A$ and $B$ for an augmented algebra $\Delta$ we write $A \hat{\otimes} B$ for $A \otimes_\Delta k \otimes _\Delta B$.  If $A$ and $B$ are projective then so is $A \hat{\otimes} B$; this follows because $\Delta^e \hat \otimes \Delta^e \cong \Delta^e$.

We define $(P \sqcup Q, \delta)$  as follows: let $(P\sqcup Q)_0 = (\Lambda * \Gamma )^e $ and
\[
(P \sqcup Q)_n  = \bigoplus \cdots \hat{\otimes} \bar{P}_{i_{r-1}} \hat{\otimes} \bar{Q}_{i_r} \hat{\otimes} \bar{P}_{i_{r+1}} \hat{\otimes} \cdots 
\]
for $n>0$, where the direct sum is over all tuples $(i_1,i_2, \ldots)$ of strictly positive integers such that $\sum_j i_j = n$.  

To define the differential $\delta$, note that $(P \sqcup Q)_n$ is spanned by elements of the form
\[ a = a_1 \hat\otimes a_2 \hat\otimes \cdots \hat\otimes a_N \]
where the $a_i$ alternate between elements of $\bar{P}_*$ and $\bar{Q}_*$, and the sum of the degrees of the $a_i$ is $n$ (we say $a_i$ has degree $m$ if $a_i \in \bar{P}_m \cup \bar{Q}_m$).  Roughly, $\delta(a)$ will consist of two terms, one of which uses $\bar d^P$ or $\bar d^Q$ to drop the degree of $a_1$ by one (and leaves everything else alone), the other using doing the same to $a_N$.

If $N=1$ we put $\delta a_1 = \bar{d}a_1$.  This means that the positive degree parts of $(\bar{P}_*, \bar{d})$ and $(\bar{Q}_*, \bar{d})$ appear as subcomplexes of $(P\sqcup Q, \delta)$. If $N>1$ and $a_1$ and $a_N$ have degree greater than one we define
\begin{equation} \label{diff} 
\delta a = \bar{d}(a_1) \hat\otimes a_2 \hat\otimes \cdots \hat\otimes a_N + (-1)^n a_1 \hat\otimes a_2 \hat\otimes \cdots \hat\otimes \bar{d}(a_N) \end{equation}
so that if $r,s>1$, $\delta$ maps 
\[ \bar{P}_r \hat\otimes\cdots\hat\otimes\bar{Q}_s \to \bar{P}_{r-1}\hat\otimes\cdots\hat\otimes\bar{Q}_s \oplus\bar{P}_r\hat\otimes\cdots\hat\otimes\bar{Q}_{s-1} .\]
If $a_1$ has degree one then we modify the first term of (\ref{diff}) as follows.  If $a_1 \in \bar{P}_1$, we want this first term to be an element of $\bar{Q}_m \hat \otimes \cdots$ for some $m$.  Let
\begin{equation} \label{element_a} 
a= (x \otimes _\Lambda p \otimes_\Lambda y) \otimes _{\Lambda * \Gamma} \alpha \otimes_{\Lambda * \Gamma} (z \otimes _\Gamma q \otimes_\Gamma w) \otimes_{\Lambda * \Gamma} \cdots 
\end{equation}
be in $\bar{P}_1 \hat\otimes \bar{Q}_m \hat \otimes \cdots$ where $x,y,z,w \in \Lambda * \Gamma$, $\alpha \in k$, $p \in P_1$, $q \in Q_m$.  Then we define the $\bar{Q}_m \hat \otimes \cdots$ component of $\delta (a)$ to be
\[
\alpha x ( (1 \otimes \eps) \circ d)(p) \eps(y)\eps(z) \otimes_\Gamma q \otimes_\Gamma w \otimes_{\Lambda * \Gamma} \cdots 
\]
regarding $(1 \otimes \eps) \circ d$ as a map of left modules $P_1 \to \Lambda$ with image contained in $I(\Lambda)$.  Issues of well-definedness arise because of the $\otimes_\Lambda$, $\otimes_{\Lambda * \Gamma}$ and $\otimes_\Gamma$ in (\ref{element_a}).  The first two present no problems, the last works because if $\lambda \in I(\Lambda)$ and $\gamma \in \Gamma$ then $\lambda \gamma = \lambda \eps(\gamma)$ in $\Lambda * \Gamma$.  We make a similar modification to the first term of (\ref{diff}) when $a_1 \in \bar{Q}_1$, and to the second term in the case that the degree of $a_N$ is one.

This completes the definition of $(P\sqcup Q, \delta)$.  It is easily verified that $\delta^2 = 0$, and we now show the resulting complex is exact. Choose contracting homotopies $s,t$ for $(P_*, d)$ and $(Q_*, d)$ which are homomorphisms of left $\Lambda$ and $\Gamma$ modules respectively.  Then $s$ consists of a family of left $\Lambda$ module maps $s_i: P_i \to P_{i+1}$ for $i \geq 0$ and $s_{-1} : \Lambda \to P_0 =\Lambda^e$ such that 
\[
s_rd_{r+1}+d_{r+2}s_{r+1}=\id _{P_{r+1}}  \;\;\; r \geq -1 
\]
and $d_0 s_{-1}=\id_\Lambda$.  
We insist that  $s_{-1}(\lambda)=\lambda\otimes 1$ and $t_{-1}(\gamma)=\gamma\otimes 1$.  We use $s$ and $t$ to build a contracting homotopy $\sigma$ for $P \sqcup Q$ which is a homomorphism of left $\Lambda * \Gamma$ modules.  Firstly, define $\sigma_{-1}(x)=x \otimes 1$ for $x \in \Lambda * \Gamma$.  Now suppose that
\[
a=a_1 \hat\otimes \cdots \hat\otimes a_{N-1} \hat \otimes (1 \otimes_\Lambda p \otimes_\Lambda (\alpha + \gamma) )\in (\cdots \hat\otimes\bar{P}_m) \subset (P\sqcup Q)_n
\]
%
where $p \in P_m$, $\alpha \in k$ and $\gamma \in I(\Gamma)$.  Noting $s(1 \otimes \gamma) \in Q_1$ and $s(p) \in P_{m+1}$, define 
\begin{eqnarray*}
\sigma(a) &=& (-1)^{n+1} a_1 \hat\otimes \cdots \hat\otimes a_{N-1} \hat\otimes (1 \otimes_\Lambda s(p) \otimes_\Lambda \alpha) \\ & & \mbox{} + (-1)^{n+1} a_1 \hat\otimes \cdots  \hat\otimes (1 \otimes_\Lambda p \otimes_\Lambda 1) \hat\otimes (1\otimes_\Gamma t(1\otimes \gamma) \otimes_\Gamma 1).
\end{eqnarray*}
in $(\cdots \hat\otimes\bar{P}_{m+1}) \oplus (\cdots \hat\otimes\bar{P}_m \hat\otimes\bar Q _1)$.  A similar definition is made on summands of $(P \sqcup Q)_n$ ending in a term from $\bar Q$.  The reader may verify that $\sigma$ is a contracting homotopy for $(P\sqcup Q,\delta)$.
\end{subsection}

\begin{subsection}{Ordinary cohomology of products} \label{ord_cohom_prod}
We now use this bimodule resolution to study the ordinary cohomology of a product $\Lambda * \Gamma$ with the aim of proving the following theorem. 
\begin{theo} \label{main_theo}
Let $\Lambda$ and $\Gamma$ be augmented $k$-algebras which are either finite-dimensional or graded connected and finitely generated.  Then 
\[
\ext_{\Lambda * \Gamma} ^* (k,k) \cong \ext_\Lambda ^* (k,k) \sqcup \ext _\Gamma ^* (k,k) .
\]
\end{theo}
Define 
\[
(R_*, \partial) = ( (P \sqcup Q)_*\otimes_{\Lambda * \Gamma} k, \delta \otimes_{\Lambda * \Gamma}k).
\]
Then $H^n (R, \partial)$ computes $\tor _n ^{\Lambda * \Gamma}(\Lambda * \Gamma, k)$ which is zero for $n>0$ as $\Lambda  * \Gamma$ is projective, hence flat, as a right module over itself.  It follows that $(R, \partial)$ is a projective resolution of $k$ as a left $\Lambda * \Gamma$ module.  Because 
\[
\im \delta \subseteq I(\Lambda * \Gamma) \cdot P \sqcup Q + P \sqcup Q \cdot I(\Lambda * \Gamma)
\]
we have $\im \partial \subseteq I(\Lambda * \Gamma) \cdot R$, and so 
\[ 
\ext^n _{\Lambda * \Gamma} (k,k) \cong \Hom_{\Lambda * \Gamma} ( R_n, k). 
\]
Similarly, $(\mathsf{P}_*, \mathfrak{d}^P) = (P_*\otimes_\Lambda k, d^P \otimes_\Lambda k)$ and $(\mathsf{Q}_*, \mathfrak{d}^Q) = (Q_*\otimes_\Lambda k, d^Q \otimes_\Lambda k)$ are projective resolutions of $k$ such that 
\begin{eqnarray*} 
\ext^n_\Lambda (k,k) & \cong & \Hom_\Lambda (\mathsf{P}_n,k) \\
 & \cong & \Hom_\Lambda (P_n\otimes_\Lambda k, k) \\
& \cong & \Hom_k (k \otimes _\Lambda P_n\otimes_\Lambda k, k)
\end{eqnarray*}
with a similar result for the cohomology of $\Gamma$. 

\begin{lem} \label{vs_iso}
$\ext^n _{\Lambda * \Gamma} (k,k)$ is isomorphic as a $k$-vector space to the degree $n$ part of $E(\Lambda)\sqcup E(\Gamma)$.
\end{lem}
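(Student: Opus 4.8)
The plan is to compute the dimension of $\Hom_{\Lambda * \Gamma}(R_n, k)$ by unpacking the combinatorial definition of $(P \sqcup Q)_n$ and matching it term-by-term against the definition of the degree-$n$ part of the coproduct $E(\Lambda) \sqcup E(\Gamma)$. Recall from the setup that $\ext^n_{\Lambda * \Gamma}(k,k) \cong \Hom_{\Lambda * \Gamma}(R_n, k)$, where $R_n = (P \sqcup Q)_n \otimes_{\Lambda * \Gamma} k$, and that $(P \sqcup Q)_n = \bigoplus \cdots \hat\otimes \bar P_{i_{r-1}} \hat\otimes \bar Q_{i_r} \hat\otimes \cdots$ summed over alternating tuples $(i_1, i_2, \ldots)$ of positive integers with $\sum_j i_j = n$. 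The coproduct $E(\Lambda) \sqcup E(\Gamma)$, by the explicit construction in Section \ref{cat_section}, has degree-$n$ part given by $\bigoplus_{\sum i_j = n} \cdots \otimes_k E^{i_{r-1}}(\Lambda) \otimes_k E^{i_r}(\Gamma) \otimes_k \cdots$ ranging over the same alternating tuples, with $E^{i}(\Lambda) = \ext^i_\Lambda(k,k)$ playing the role of $I(E(\Lambda))$ in each slot. So the two objects are indexed by the \emph{same} set of alternating compositions of $n$, and the task reduces to matching the contribution of a single summand.

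**The key computation** is therefore to show that applying $\Hom_{\Lambda * \Gamma}(-, k)$ to the summand $\cdots \hat\otimes \bar P_{i_{r-1}} \hat\otimes \bar Q_{i_r} \hat\otimes \cdots$ of $R_n$ yields a space of dimension $\prod_j \dim \ext^{i_j}_{\Lambda}(k,k)$ (alternating between $\Lambda$ and $\Gamma$). First I would use that applying $- \otimes_{\Lambda * \Gamma} k$ and then $\Hom_{\Lambda * \Gamma}(-, k)$ kills the bimodule structure down to the augmented part. The crucial simplification is that $\hat\otimes = - \otimes_\Delta k \otimes_\Delta -$ already inserts $k$ between factors, so after inducing up and then tensoring down with $k$ on the right and applying $\Hom(-,k)$, each induced factor $\bar P_{i_j} = P_{i_j} \uparrow^{(\Lambda * \Gamma)^e}$ should contribute exactly the $k$-vector space $k \otimes_\Lambda P_{i_j} \otimes_\Lambda k$, which by the displayed computation above is dual to $\ext^{i_j}_\Lambda(k,k)$. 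Concretely, I expect an isomorphism
\[
\Hom_{\Lambda * \Gamma}\bigl( (\cdots \hat\otimes \bar P_{i_{r-1}} \hat\otimes \bar Q_{i_r} \hat\otimes \cdots) \otimes_{\Lambda * \Gamma} k,\; k \bigr) \cong \cdots \otimes_k \ext^{i_{r-1}}_\Lambda(k,k) \otimes_k \ext^{i_r}_\Gamma(k,k) \otimes_k \cdots,
\]
obtained by repeatedly applying Frobenius reciprocity (adjunction between induction and restriction) and the identifications $k \otimes_\Lambda P_{i} \otimes_\Lambda k$ established in the excerpt, together with the fact that $(\Lambda * \Gamma)^e$ restricts freely over $\Lambda^e$ and $\Gamma^e$.

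**The main obstacle** I anticipate is bookkeeping at the interfaces between adjacent induced factors: the definition of $\hat\otimes$ forces tensoring over $\Lambda * \Gamma$ with a copy of $k$ in between, and one must check that the two-sided module structures on the ends of $\bar P_{i_j}$ and $\bar Q_{i_{j+1}}$ collapse correctly under these balanced tensor products so that no cross-terms survive and no multiplicities are introduced or lost. In particular I would verify that the right-hand structure of one induced factor and the left-hand structure of the next each get annihilated against the interposed $k$, so that each slot contributes independently and the total dimension is exactly the product over $j$. Since the summands of $R_n$ and of the degree-$n$ part of $E(\Lambda) \sqcup E(\Gamma)$ are in bijection via the same indexing set of alternating compositions, summing the per-summand isomorphism over all such tuples gives the claimed vector-space isomorphism. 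This argument is purely dimensional and does not yet address multiplicativity — that is deferred to the full proof of Theorem \ref{main_theo}, which must additionally identify the algebra structure using the universal property of the coproduct, just as in the proof of Theorem \ref{ordinary_coprod}.
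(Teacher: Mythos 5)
Your proposal is correct and takes essentially the same approach as the paper: both arguments index the summands of $R_n$ and of the degree-$n$ part of $E(\Lambda)\sqcup E(\Gamma)$ by the same alternating compositions of $n$, and collapse each induced factor to $k \otimes_\Lambda P_{i_j} \otimes_\Lambda k$ (identified with the dual of $E^{i_j}(\Lambda)$ via the smallness of the resolutions and finite generation of $P_n$). The only cosmetic difference is that the paper carries out the comparison on the $k$-dual (tensor) side, identifying $k \otimes_{\Lambda * \Gamma} R_n$ summand-by-summand with $\cdots \otimes_\Gamma k \otimes_\Lambda P_{i_{r-1}} \otimes_\Lambda k \otimes_\Gamma Q_{i_r} \otimes_\Gamma k \otimes_\Lambda \cdots$, whereas you work on the $\Hom$ side via Frobenius reciprocity --- the same computation viewed through adjunction.
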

\begin{proof}
Our restrictions on $\Lambda$ and $\Gamma$ are enough to guarantee that $P_n$ is finitely-generated, so $\Hom_k (k \otimes _\Lambda P_n\otimes_\Lambda k, k) \cong k \otimes _\Lambda P_n \otimes _\Lambda k$.  
%
%
Writing $D$ for the duality functor $\Hom_k ( -, k)$ on the category of $k$-vector spaces,  a typical summand of the dual of $n$th degree part of $E(\Lambda) \sqcup E(\Gamma)$ looks like
\begin{align*} & \cdots \otimes_k D E^{i_{r-1}}(\Lambda) \otimes_k DE^{i_r} (\Gamma) \otimes_k DE^{i_{r+1}}(\Lambda) \otimes_k \cdots   \\
 \cong  & \cdots \otimes_\Gamma k \otimes_\Lambda P_{i_{r-1}} \otimes_\Lambda k \otimes_\Gamma Q_{i_r} \otimes_\Gamma k \otimes _\Lambda P_{i_{r+1}} \otimes_\Lambda k \otimes_\Gamma \cdots
\end{align*}
The $k$-dual of $E^n(\Lambda *\Gamma)$ is $k \otimes_{\Lambda * \Gamma} (P \sqcup Q)_n \otimes_{\Lambda * \Gamma} k=k\otimes_{\Lambda * \Gamma} R$, a typical summand of which is
%
%
%
\begin{align} \label{iso}
& \cdots \otimes_{\Lambda * \Gamma} \bar P_{i_{r-1}}  \otimes_{\Lambda * \Gamma} k \otimes_{\Lambda * \Gamma} \bar Q_{i_r}  \otimes_{\Lambda * \Gamma} k \otimes_{\Lambda * \Gamma} \bar P_{i_{r+1}}  \otimes_{\Lambda * \Gamma} k \otimes_{\Lambda * \Gamma} \cdots
 \\ \nonumber
\cong& \cdots  \otimes_\Gamma k \otimes_\Lambda P_{i_{r-1}} \otimes_\Lambda k \otimes_\Gamma Q_{i_r} \otimes_\Gamma k \otimes _\Lambda P_{i_{r+1}} \otimes_\Lambda k \otimes_\Gamma \cdots
\end{align}
The result follows.
\end{proof}


We now look at how $E(p_\Lambda)$ and $E(p_\Gamma)$ behave on the level of chain maps.  Regard $\ext^n_\Lambda(k,k)$ as the set of degree $n$ chain maps $\mathsf{P}_* \to \mathsf{P}_*$ modulo homotopy, so that multiplication in $E(\Lambda)$ corresponds to composition of chain maps.  Then $E(p_\Lambda)$ can be described as follows. Given an element $l$ of $\ext^n _\Lambda(k,k)$ pick a representative chain map $f_* : \mathsf{P}_* \to \mathsf{P}_*$, consisting of a family of maps $f_i : \mathsf{P}_{i+n} \to \mathsf{P}_i$ for $i \geq 0$.  Then $E(p_\Lambda)(l)$ can be represented by a chain map $F_*$ on $R_*$ which looks roughly like $\id \otimes f_*$.  Specifically, $F_*$ acts as follows.  It kills any summand of $R_{i+n}$ not of the form $\cdots \hat \otimes \bar P _{m}\otimes_{\Lambda *\Gamma} k$ for $m\geq n$. Now identify a summand of $R_{i+n}$ ending $\cdots  \hat\otimes \bar{Q}_r \hat\otimes \bar{P}_m \otimes _{\Lambda * \Gamma} k$  with
\[
\cdots \otimes_\Lambda k \otimes_\Gamma Q_r \otimes_\Gamma k \otimes _\Lambda P_m \otimes_\Lambda k
\]
similarly to (\ref{iso}). If $m>n$ then $F_i$ maps 
\[
\cdots \otimes _\Lambda 1 \otimes_\Gamma q \otimes _\Gamma 1 \otimes_\Lambda p \otimes_\Lambda 1 \,\,\, \mapsto \,\,\, \cdots \otimes _\Lambda 1 \otimes_\Gamma q \otimes_\Gamma 1 \otimes f(p \otimes_\Lambda 1)
\]
where the latter is in $\cdots \otimes_\Lambda k \otimes_\Gamma Q_r \otimes_\Gamma k \otimes _\Lambda P_{m-n} \otimes_\Lambda k\subset R_i$, and if $m=n$ then we think of $(\eps \otimes 1) \circ f_0$ as a map $P_m \otimes_\Lambda k \to k$, and let $F_i$ act by
\[
\cdots \otimes _\Lambda 1 \otimes_\Gamma q \otimes _\Gamma 1 \otimes_\Lambda p \otimes_\Lambda 1 \,\,\, \mapsto \,\,\, \cdots \otimes _\Lambda 1 \otimes_\Gamma q \otimes_\Gamma  ((\eps \otimes 1) \circ f_0)(p \otimes_\Lambda 1)
\]
in $\cdots \otimes_\Lambda k \otimes_\Gamma Q_r \otimes _\Gamma k \subset R_i$.
%
%

We now complete the proof of Theorem \ref{main_theo}.  Using Lemma \ref{vs_iso}, it is enough to prove that the morphism of augmented algebras $E(p_\Lambda) \sqcup E(p_\Gamma): E(\Lambda) \sqcup E(\Gamma) \to E(\Lambda * \Gamma)$ arising from $E(p_\Lambda)$ and $E(p_\Gamma)$ is onto.

Pick a word  $l_1 g_1 l_2 g_2 \cdots$ in $E(\Lambda) \sqcup E(\Gamma)$, where $l_i \in E(\Lambda), g_i \in E(\Gamma)$, let $l_i$ be represented by a chain map $f^i_*$ and $g_i$ by a chain map $f'^i_*$, and let $F^i_*$ and $F'^i_*$ be the chain maps representing $E(p_\Lambda)(l_i)$ and $E(p_\Gamma)(g_i)$ manufactured as above. Then $E(p_\Lambda) \sqcup E(p_\Gamma)$ sends $l_1 g_1 l_2 g_2 \cdots$ to the element of $E(\Lambda * \Gamma)$ represented by the composition $F^1_* \circ F'^1_* \circ F^2 _* \circ F'^2_* \circ \cdots $.

(\ref{iso}) shows that an element of $E^n(\Lambda * \Gamma)$ can be regarded as a tensor product $\alpha_1 \otimes_k \beta_1 \otimes_k \alpha_2 \otimes \cdots$ where each $\alpha_i$ lies in $\Hom_k(k \otimes_\Lambda P_r \otimes_k, k)\cong E^r(\Lambda)$ and each $\beta_i$ lies in $\Hom_k(k \otimes_\Lambda Q_s \otimes_k, k)\cong E^s(\Gamma)$ for some $r,s$ depending on $i$.  Let $l_i$ be the element of $E^r(\Lambda)$ corresponding to $\alpha_i$, and $g_i$ be the element of $E^s(\Gamma)$ corresponding to $\beta_i$.  Lifting each $\alpha_i$ to a chain map $a_i$ on $\mathsf{P}$, and each $\beta_i$ to a chain map $b_i$ on $\mathsf{Q}$, then taking chain maps $A^i_*$ and $B^i_*$ representing $E(p_\Lambda)(l_i)$ and $E(p_\Gamma)(g_i)$, we have that $A^1_* \circ B^1_* \circ A_* ^2 \circ \cdots$ is a chain map that lifts $\alpha_1 \otimes_k \beta_1 \otimes_k \alpha_2 \otimes \cdots$.  We have realised $\alpha_1 \otimes_k \beta_1 \otimes_k \alpha_2 \otimes \cdots$ as an element of $\im E(p_\Lambda) \sqcup E(p_\Gamma)$, finishing the proof.
\end{subsection}

\begin{subsection}{Hochschild cohomology of products} \label{hoch_cohom_prod}
We prove two main results on the structure of the Hochschild cohomology of a product.  Firstly:

\begin{prop} \label{nilp_HH}
Let $\Lambda$ and $\Gamma$ be non-trivial augmented algebras which are either finite dimensional or finitely generated graded connected.  Suppose that not both $E(\Lambda)$ and $E(\Gamma)$ are isomorphic to $k[x]/x^2$ as ungraded rings.  Then $\phi_k: \HH(\Lambda * \Gamma) \to E(\Lambda * \Gamma)$ is zero in positive degrees, and if $I(\Lambda)^N= I(\Gamma)^N=0$ then every element of positive degree in $\HH(\Lambda * \Gamma)$ has $N$th power zero.
\end{prop}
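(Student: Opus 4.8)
The plan is to deduce both assertions from Theorem \ref{main_theo} together with the two facts about $\phi_k$ recalled in Section \ref{hoch_section}. First I would use Theorem \ref{main_theo} to identify $E(\Lambda * \Gamma)$ with the free product $E(\Lambda) \sqcup E(\Gamma)$, and invoke \cite[Theorem 1.1]{SS} to get that the image of $\phi_k \colon \HH(\Lambda * \Gamma) \to E(\Lambda * \Gamma)$ lies in the graded centre of this free product. The first assertion then reduces to the purely ring-theoretic claim that, under the stated hypothesis, the graded centre of $E(\Lambda) \sqcup E(\Gamma)$ is zero in positive degrees.

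Write $A = E(\Lambda)$ and $B = E(\Gamma)$; these are graded connected, and since $\Lambda,\Gamma$ are non-trivial each has non-zero positive part. I would first record that a graded connected algebra is isomorphic to $k[x]/x^2$ as an ungraded ring exactly when it is two-dimensional, i.e. of the form $k \oplus A^1$ with $\dim_k A^1 = 1$ and $A^1 \cdot A^1 = 0$; so the hypothesis is precisely that at least one factor — say $B$ — has $\dim_k B^{>0} \geq 2$. Now take a non-zero homogeneous $z$ of positive degree in the graded centre, expand it in the basis of reduced alternating words, let $\ell$ be the maximal length of a word occurring, and let $z_\ell$ be its length-$\ell$ part. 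The crux is a leading-term analysis: for homogeneous $x \in A^{>0} \cup B^{>0}$, one-sided multiplication raises word-length by at most one, and the length-$(\ell+1)$ component of $zx$ is obtained by appending $x$ on the right to the length-$\ell$ words of $z$ ending in a letter of the opposite type, while that of $xz$ is obtained by prepending $x$ to those beginning with such a letter. Comparing these components in $zx = \pm xz$, and using that a reduced word has a single first and last letter, I would argue as follows: letting $x = b$ range over a basis of $B^{>0}$ forces every length-$\ell$ word of $z$ beginning with an $A$-letter to end in $b$, for each such $b$; since there are at least two choices of $b$ this is impossible, so $z_\ell$ has no word beginning with an $A$-letter, and feeding this back into the same comparison shows it has no word ending with an $A$-letter either. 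Hence every top word begins and ends with a $B$-letter, and a final comparison with $x = a \in A^{>0}$, whose two sides now consist of disjoint sets of reduced words, forces $z_\ell = 0$, contradicting the maximality of $\ell$. Therefore $z = 0$, and $\phi_k$ vanishes in positive degrees.

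The second assertion then follows formally. The pullback description gives $I(\Lambda * \Gamma) = I(\Lambda) \oplus I(\Gamma)$ with componentwise multiplication, so $I(\Lambda * \Gamma)^N = I(\Lambda)^N \oplus I(\Gamma)^N = 0$ whenever $I(\Lambda)^N = I(\Gamma)^N = 0$. By \cite[Proposition 4.4]{SS}, in the augmentation-ideal form recalled in Section \ref{hoch_section}, every element of $\ker \phi_k$ then has $N$th power zero; but by the first part every positive-degree element of $\HH(\Lambda * \Gamma)$ lies in $\ker \phi_k$, so every such element has $N$th power zero.

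I expect the combinatorial leading-term argument for the graded centre to be the main obstacle, both because of the bookkeeping of how reduced words merge or extend under one-sided multiplication and because the graded-commutativity signs must be carried along (although in the end only the vanishing of coefficients, and not the signs, is used). The hypothesis is genuinely needed: when $A \cong B \cong k[x]/x^2$ the element $ab + ba$ is a non-zero central class in degree two, and it is exactly the availability of a second basis vector in the larger factor that breaks the matching of top words.
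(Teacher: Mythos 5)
Your proof is correct and shares its skeleton with the paper's: both use Theorem \ref{main_theo} to identify $E(\Lambda * \Gamma)$ with $E(\Lambda) \sqcup E(\Gamma)$, \cite[Theorem 1.1]{SS} to place the image of $\phi_k$ in the graded centre, and \cite[Proposition 4.4]{SS} together with $I(\Lambda * \Gamma)^N = I(\Lambda)^N \oplus I(\Gamma)^N = 0$ for the nilpotency claim. The difference is in the combinatorial core. The paper's Lemma \ref{gr_centre} takes an arbitrary homogeneous central element of $R \sqcup S$ and eliminates word types one by one: words of type RR and SS cannot occur, words of type RS and SR must begin and end in annihilator elements, and centrality then forces every element of $I(R)$ and $I(S)$ to be annihilating, so that $I(R)^2 = I(S)^2 = 0$ and both augmentation ideals are one-dimensional. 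In other words the paper proves the sharp characterization --- a non-trivial positive-degree graded centre forces both factors to be $k[x]/x^2$ --- which it also uses to explain the example $xy+yx$. You instead prove exactly the contrapositive instance needed: translate the hypothesis into one factor having positive part of dimension at least two, pass to the maximal-length component $z_\ell$ of a central element, and compare top-length terms of $zx$ and $xz$. That mechanism is sound: multiplying a length-$\ell$ word by a letter of the opposite type yields a length-$(\ell+1)$ word, distinct basis words stay distinct, and your matching argument does kill the words of each of the four types in turn, with no annihilator bookkeeping. What the paper's route buys is the stronger structural statement; what yours buys is a shorter, more direct contradiction from the dimension count. One caveat you share with the paper: both arguments need $E(\Lambda)^{>0}$ and $E(\Gamma)^{>0}$ to be non-zero. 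This is automatic for non-trivial graded connected algebras and for finite-dimensional local ones, but your claim that it follows from non-triviality alone is exactly as unexamined as the paper's silent choice of non-zero elements of $I(R)$ and $I(S)$ in the proof of Lemma \ref{gr_centre}, so it is not a defect of your argument relative to the paper's.
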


Secondly we determine the structure of a certain quotient of $\HH(\Lambda * \Gamma)$ in terms of the cohomology of $\Lambda$ and $\Gamma$.  Let $\iHH(\Lambda)$ be the unital subalgebra generated by the ideal $\ker \phi_k: \HH(\Lambda) \to E(\Lambda)$.  This is naturally an augmented algebra, so we can form $\iHH(\Lambda) * \iHH(\Gamma)$.  Let $A(\Gamma)$ be the annihilating ideal $\{ \gamma \in \Gamma: \gamma \Gamma = \Gamma \gamma =0 \}$.

\begin{theo} \label{hoch_prod}
Let $\Lambda$ and $\Gamma$ be finite dimensional or finitely generated graded connected algebras.  Then there 
is an ideal $R$ of $\HH(\Lambda * \Gamma)$ such that
\[ \HH(\Lambda * \Gamma) / R \cong (\iHH(\Lambda) * \iHH(\Gamma)) \oplus (E(\Lambda)\otimes_k A(\Gamma)) \oplus (E(\Gamma)\otimes_k A(\Lambda)) \]
as algebras.  The product of any two elements of the copy of $E(\Lambda)\otimes_k A(\Gamma)$ is zero in this quotient, as is the product of any two elements of $E(\Gamma)\otimes_k A(\Lambda)$. 
\end{theo}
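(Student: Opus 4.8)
The plan is to compute $\HH(\Lambda * \Gamma)$ directly from the bimodule resolution $(P \sqcup Q, \delta)$ of \ref{bimod_res_subsection}, by applying $\Hom_{(\Lambda * \Gamma)^e}(-, \Lambda * \Gamma)$ and analysing the resulting cochain complex one summand at a time. The whole argument rests on one structural feature of the pullback: because multiplication in $\Lambda * \Gamma$ is componentwise, $I(\Lambda) \cdot I(\Gamma) = I(\Gamma) \cdot I(\Lambda) = 0$. Embedding $\Lambda$ in $\Lambda * \Gamma$ by $\iota\colon \lambda \mapsto (\lambda, \eps(\lambda)1)$, this forces a decomposition of $\Lambda * \Gamma$, restricted to $\Lambda^e$, as a direct sum of the regular bimodule $\iota(\Lambda) \cong \Lambda$ and a trivial bimodule of dimension $\dim_k I(\Gamma)$, namely $(0, I(\Gamma))$, on which both sides act through $\eps$; symmetrically over $\Gamma^e$. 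First I would record this, together with the decomposition $I(\Lambda * \Gamma) = (I(\Lambda),0) \oplus (0, I(\Gamma))$ into sub-bimodules.

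Writing $\bar P = P\!\!\uparrow^{(\Lambda * \Gamma)^e}$ and using Frobenius reciprocity, each space $\Hom_{(\Lambda * \Gamma)^e}(\text{summand}, \Lambda * \Gamma)$ rewrites in terms of Hom-spaces over the factors; for a length-one summand this gives
\[ \Hom_{(\Lambda * \Gamma)^e}(\bar P_n, \Lambda * \Gamma) \cong \Hom_{\Lambda^e}(P_n, \Lambda) \oplus \bigl(\Hom_{\Lambda^e}(P_n, k) \otimes_k I(\Gamma)\bigr), \]
recording $\HH^n(\Lambda)$-data and $E^n(\Lambda) \otimes_k I(\Gamma)$-data. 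The substance of the proof is that passing to cohomology in the \emph{total} complex cuts both pieces down, via the degree-one modification terms of $\delta$ that glue a length-$N$ word to a length-$(N-1)$ word by multiplying by an element of $I(\Lambda)$ or $I(\Gamma)$. Dualising, these impose extra conditions on a length-one cochain $c$. For $c$ valued in $\iota(\Lambda)$, the boundary arising from $\bar P_n \hat\otimes \bar Q_1$ evaluates to $\iota(c(p)) \cdot (0,\gamma) = (0, \eps(c(p))\gamma)$, so the cocycle condition forces $\eps \circ c = 0$, i.e.\ the class lies in $\ker \phi_k$ --- exactly the passage from $\HH(\Lambda)$ to the generators of $\iHH(\Lambda)$. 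For $c$ valued in $(0, I(\Gamma))$, the boundaries from $\bar Q_1 \hat\otimes \bar P_n$ and $\bar P_n \hat\otimes \bar Q_1$ multiply the value of $c$ by $I(\Gamma)$ on the left and right, forcing it into $A(\Gamma)$. Thus the surviving length-one cohomology is $\iHH(\Lambda) \oplus \iHH(\Gamma)$ together with $(E(\Lambda) \otimes_k A(\Gamma)) \oplus (E(\Gamma) \otimes_k A(\Lambda))$.

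I would then take $R$ to be the subspace of classes supported on words of block-length at least two. This is an ideal: the Yoneda product corresponds to concatenation in the resolution, which merges at most one adjacent pair of blocks, so a product of classes of block-lengths $m$ and $n$ is supported in block-length at least $\max(m,n)$, and $R$ is closed under multiplication by anything. Modulo $R$ only the length-one classes remain, and it then remains to read off the ring structure. Products of two pure-$\Lambda$ classes stay in length-one $\bar P$-words and reproduce $\iHH(\Lambda)$ (likewise for $\Gamma$); a pure-$\Lambda$ class times a pure-$\Gamma$ class is supported on a length-two word $\bar P \hat\otimes \bar Q$, hence lies in $R$ and vanishes --- this is precisely the vanishing of cross terms that assembles $\iHH(\Lambda)$ and $\iHH(\Gamma)$ into the pullback $\iHH(\Lambda) * \iHH(\Gamma)$. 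The product of two elements of $E(\Lambda) \otimes_k A(\Gamma)$ is valued in $A(\Gamma)\cdot A(\Gamma) \subseteq A(\Gamma)\cdot I(\Gamma) = 0$, so it vanishes, and similarly for $E(\Gamma) \otimes_k A(\Lambda)$; a product of an element of one $E\otimes A$ summand with one of the other is again length-two, hence in $R$. Checking in addition that products with the $\iHH$-part land back inside the appropriate $E\otimes A$ summand shows these are ideals of the quotient, giving the stated algebra direct sum.

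The main obstacle I anticipate is the careful bookkeeping of the modification terms of $\delta$ and of the Yoneda product across word lengths: one must verify that the block-length filtration really does split the cochain complex up to these connecting terms, that the cocycle conditions identified above are the \emph{only} ones (so that no cohomology is lost and none is spuriously created), and that products computed by composing chain maps on $(P \sqcup Q)$ are supported where claimed. Establishing that $R$ is exactly the length-$\geq 2$ part --- rather than something larger --- and that the three summands close up under multiplication as stated is the delicate part; the facts that $\phi_k = 0$ in positive degrees (Proposition \ref{nilp_HH}) and $E(\Lambda * \Gamma) \cong E(\Lambda) \sqcup E(\Gamma)$ (Theorem \ref{main_theo}) serve as consistency checks and, through the long exact sequence (\ref{ILles}), can be used to pin down the additive ranks.
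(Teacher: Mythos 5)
Your proposal is correct and is essentially the paper's own proof: your block-length-one part of $P \sqcup Q$ is the paper's subcomplex $\bar P * \bar Q$, your ideal $R$ of classes supported in length at least two is its $\im \pi^*$ from the long exact sequence of that subcomplex, and your extra cocycle conditions forcing length-one cochains to take values in $I(\Lambda) \oplus A(\Gamma)$ are exactly its analysis of the connecting homomorphism (Lemma \ref{ker_conn_hm}), yielding the same additive decomposition (Proposition \ref{additive_decomp}). The verifications you defer --- that $R$ is an ideal (the paper lifts cocycles killing $\bar P * \bar Q$ to chain maps killing it and invokes graded commutativity) and that the length-one $\Lambda$-classes multiply as $\iHH(\Lambda)$ modulo $R$ (the explicit chain maps $c(f_*)$, their well-definedness and multiplicativity) --- are precisely the technical content the paper supplies.
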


To prove the first of these theorems we need a result on the graded centre.

\begin{lem} \label{gr_centre} 
 Let $R$ and $S$ be finitely generated graded connected $k$-algebras.
Then the graded centre of $R\sqcup S$ is concentrated in degree $0$ 
unless both $R$ and $S$ are isomorphic as ungraded rings to $k[x]/x^2$.  \end{lem}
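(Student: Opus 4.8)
The plan is to exploit the monomial basis of the coproduct. Since $R$ and $S$ are graded connected, I would fix homogeneous $k$-bases of $I(R)$ and $I(S)$; the reduced words, that is, tensors alternating between these basis elements, then form a homogeneous $k$-basis of the positive-degree part of $R\sqcup S$, graded by total internal degree. I would write a homogeneous graded-central element $z$ of positive degree as $z=\sum_w c_w w$ and aim to show that its mere existence forces $\dim_k I(R)=\dim_k I(S)=1$. This suffices, because a graded connected algebra whose augmentation ideal is one-dimensional is $k\oplus kr$ with $r^2$ automatically zero for degree reasons, hence isomorphic as an ungraded ring to $k[x]/x^2$. Throughout I assume $I(R)\neq 0\neq I(S)$, which is the case relevant to Proposition \ref{nilp_HH}.

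The heart of the argument is an extremal, top tensor-length computation. Let $N$ be the largest tensor-length of a word occurring in $z$, and commute $z$ past a homogeneous basis element $s$ of $I(S)$ using $sz=(-1)^{\deg z \deg s}zs$. Because left multiplication by $s$ raises the length of a word by one exactly when that word begins with a factor from $R$, and lowers or preserves it otherwise, the length-$(N+1)$ component of $sz$ is $\sum c_w\,(s\otimes w)$ summed over the top words $w$ beginning in $R$; symmetrically the length-$(N+1)$ component of $zs$ is $\sum c_w\,(w\otimes s)$ over top words ending in $R$. Prepending, respectively appending, the fixed factor $s$ is injective on words, so there is no cancellation, and matching the two components term by term shows: if a top word $w$ beginning in $R$ has $c_w\neq 0$, then $s\otimes w$ must reappear as some $w'\otimes s$, which pins the last letter of $w$ to be $s$. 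Since this must hold for every basis element $s$ of $I(S)$ simultaneously, no word can satisfy it once $\dim_k I(S)\geq 2$; hence in that case every top word beginning in $R$ has zero coefficient. The symmetric computation against a basis element of $I(R)$ shows that $\dim_k I(R)\geq 2$ forces every top word beginning in $S$ to vanish.

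Combining these observations closes the argument. If both $I(R)$ and $I(S)$ had dimension at least two, every top word would vanish, contradicting the maximality of $N$. The one remaining case to rule out is the mixed one, say $\dim_k I(S)\geq 2$ and $\dim_k I(R)=1$ with $I(R)=kr$: here every top word must begin in $S$, and I would commute with the single generator $r$. The term-matching then not only constrains the last letter but also yields a coefficient relation identifying $c_w$ with the coefficient of the word obtained by moving an $r$ to the front; that word begins in $R$, so its coefficient has already been shown to vanish, forcing $c_w=0$ and again contradicting maximality. Therefore $\dim_k I(R)=\dim_k I(S)=1$, and both factors are $k[x]/x^2$.

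I expect the main obstacle to be the careful bookkeeping in the term-by-term matching, and in particular verifying that the mixed case genuinely closes: the length-$(N+1)$ comparison by itself only bounds the two dimensions \emph{separately}, and it is the extra coefficient relation extracted from the single-generator computation that is needed to force both to equal one. The extremal choice of $N$ is what makes this tractable, since prepending and appending a fixed factor are injective on the monomial basis, so the leading part of each commutator is cancellation-free.
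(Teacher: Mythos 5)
Your proof is correct, and it reaches the conclusion by a genuinely different route from the paper's, although both arguments are combinatorial analyses of the word basis of $R\sqcup S$ under the relations $sz=\pm zs$ and $rz=\pm zr$. The paper organizes its proof by word \emph{type}: it first shows $z$ contains no word beginning and ending in $R$ (or beginning and ending in $S$), then uses the annihilator ideals $A(R)$ and $A(S)$ to pin down the first and last letters of the surviving words, deduces that every element of $I(R)$ lies in $A(R)$ (hence $I(R)^2=I(S)^2=0$), and only at the end compares $rz$ with $zr$ for varying $r$ to get one-dimensionality. You instead run an extremal argument on the maximal tensor length $N$: only prepending or appending a letter reaches length $N+1$, these operations are injective on basis words, so the top-length components of the two sides of each commutation relation match term by term with no possibility of cancellation; this gives the dimension bounds directly, and $r^2=0$ then comes for free from graded connectedness once $\dim_k I(R)=1$, rather than being proved separately. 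Your route never needs $A(R)$ or $A(S)$, and the extremality makes cancellation-freeness manifest (in the paper's first step this rests on the unstated observation that a homogeneous element cannot contain both a word and a proper extension of it, for internal-degree reasons); the paper's route, in exchange, yields finer information about the shape of a graded-central element --- alternating words in annihilator elements --- which is what its subsequent example $xy+yx$ in $k[x]/x^2\sqcup k[y]/y^2$ illustrates. Two further remarks: your standing assumption $I(R)\neq 0\neq I(S)$ is implicit in the paper's proof as well, and is genuinely needed for the statement as written (take $R=k$ and $S=k[x]$ with $x$ in even degree), so flagging it is right; and your mixed case does close as you hoped, since for a top word $w$, necessarily beginning in $S$, either no word $w'\otimes r$ can equal $r\otimes w$, killing $c_w$ outright, or the unique matching $w'$ begins with $r$ and therefore has vanishing coefficient, killing $c_w$ through the coefficient identity.
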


\begin{proof}
If we fix bases $\{ r_i: i \in J_1 \}$ of $I(R)$ and $\{ s_i : s \in J_2 \}$ of $I(S)$ then $R \sqcup S$ has a basis consisting of the identity together with all words of the form $r_{i_1} s_{i_2} r_{i_3} \ldots r_{i_n}$, $s_{i_1} r_{i_2} s_{i_3} \ldots s_{i_n}$, $r_{i_1} s_{i_2} r_{i_3} \ldots s_{i_n}$ and $s_{i_1} r_{i_2} s_{i_3} \ldots r_{i_n}$.  We refer to these words as being of types RR, SS, RS, and SR respectively.

Let $z$ be a homogeneous element of the graded centre of $R \sqcup S$.  No word of type RR may appear in $z$, for given non-zero $s \in I(S)$ the product $sz$ would be non-zero and spanned by words beginning with an element of $s$ and therefore could not be equal to $\pm zs$.  Similarly no words of type SS may appear in $z$.

If $r \in I(R)$ is non-zero then $rz$ is spanned by words of type RR and type RS, whereas $zr$ is spanned by words of type RR and type SR.  Therefore only the type RR words may be non-zero.  It follows that any type RS word in $z$ begins with an element of $A(R)$. By symmetry every word in $z$ of type RS begins with an element of $A(R)$ and ends with an element of $A(S)$, and every word of type SR in $z$ begins with an element of $A(S)$ and ends with an element of $A(R)$.

Again let $r \in I(R)$ be non-zero. $rz$ consists only of words beginning with $r$.  The only elements of $I(R)$ beginning elements of $zr$ are in $A(R)$, and so every $r \in I(R)$ belongs to $A(R)$.  It follows $I(R)^2=0$, and similarly $I(S)^2=0$.  Comparing $rz$ and $zr$ for different choices of $r$ we see that $I(R)$ must be one-dimensional, and similarly $I(S)$.
\end{proof}

If $R=k[x]/x^2$ and $S=k[y]/y^2$ then the graded centre of $R \sqcup S$ may be non-trivial, for example with $x$ and $y$ in degree $1$ the element $xy+yx$ lies in the graded centre.

Proposition \ref{nilp_HH} can now be deduced:  recall that the image of $\phi_k$ lies in the graded centre of $E(\Lambda * \Gamma) \cong E(\Lambda) \sqcup E(\Gamma)$. Under the hypotheses of this theorem Lemma \ref{gr_centre} shows that this graded centre is zero in positive degrees.  The last statement follows from \cite[Proposition 4.4]{SS}.

Note that if $\Lambda$ is finite dimensional and has only one simple module $k$ then it is not possible for $E(\Lambda)$ to be isomorphic to $k[x]/x^2$ for dimension reasons.  If $\Lambda$ is finitely generated graded connected and $E(\Lambda) \cong k[x]/x^2$ then the results of \cite[\S 7]{wall} imply $\Lambda \cong k[x]$.

When $E(\Lambda)\cong E(\Gamma) \cong k[x]/x^2$, the map $\phi_k$ can be non-zero in positive degrees.
\begin{ex} \label{kx_example}
Let $k$ have characteristic not two and $\Lambda = k[x]$ so that
\[\begin{array}{lll}
 E(\Lambda) = k[X]/X^2 &  & E(\Lambda * \Lambda) = k\langle X,Y \rangle / (X^2, Y^2) \\
\HH^0(\Lambda) \cong k[x] & & \HH^1(\Lambda) \cong k[x]
\end{array} \]
and $\HH^n(\Lambda)$ is zero in higher degrees. Write $k\{ S \}$ for the free graded commutative ring on a set of generators $S$ with specified degrees.  Then the Hochschild cohomology ring of $\Lambda * \Lambda \cong k[x,y]/xy$ is
\[
\frac{k\{x_0, y_0, x_{1,r},y_{1,r}, \xi_2, \xi_3 : r \geq 1  \} }
{x_0y_0, 
x_0 x_{1,r}=x_{1,r+1},
y_0 x_{1,r},
y_0 y_{1,r}=y_{1,r+1},
y_0 x_{1,r},
x_0 \xi_i,
y_0 \xi_i,
\xi_2 x_{1,1} = \xi_3 = \xi_2 y_{1,1} }
\]
where the first subscripts denote degrees. 
The map $\phi_k$ is the augmentation in degree zero, zero in odd degrees, and $\phi_k (\xi_2) = XY+YX$.
\end{ex}


We now turn to the proof of Theorem \ref{hoch_prod}.  
Note that the inclusion $\iota: I(\Lambda) \hookrightarrow \Lambda * \Gamma$ is a map of $\Lambda * \Gamma$ bimodules, and the canonical projection $p_\Lambda$ induces $p_\Lambda ^e : (\Lambda * \Gamma)^e \to \Lambda ^e$.  Functoriality of $\ext$ gives a map
\[ \ext ^n_{p_\Lambda ^e} (p_\Lambda, \iota): \ext^n_{\Lambda^e} (\Lambda, I(\Lambda)) \to \ext_{(\Lambda * \Gamma)^e} ^n (\Lambda * \Gamma, \Lambda * \Gamma). \]
We could try to define a map $\iHH(\Lambda)\to \HH(\Lambda*\Gamma)$ by lifting an element of $\ker \phi_k \subset \iHH(\Lambda)$ to an element of $\ext_{\Lambda^e}(\Lambda, I(\Lambda))$ using (\ref{ILles}), then mapping this to $\HH(\Lambda*\Gamma)$ with $\ext ^n_{p_\Lambda ^e} (p_\Lambda, \iota)$.  In order for this to be well defined we would require the latter map to kill elements of $\ext_{\Lambda^e}(\Lambda, I(\Lambda))$ in the image of the connecting homomorphism $\omega_\Lambda$.  This does not happen in general: it can be shown that 
\[
\ext_{p_\Lambda ^e}(p_\Lambda, \iota) \circ \omega_\Lambda = \omega_{\Lambda * \Gamma} \circ \ext_{p_\Lambda ^e}(p_\Lambda, \id_k) 
\]
where $\omega_{\Lambda * \Gamma}$ is the connecting homomorphism in the long exact sequence arising from applying $\Hom_{(\Lambda*\Gamma)^e}(\Lambda * \Gamma, -)$ to the exact sequence of $\Lambda * \Gamma$ bimodules
\[0 \to \Lambda * \Gamma \to \Lambda \oplus \Gamma \to k \to 0\]
where $\Lambda$ and $\Gamma$ are $\Lambda * \Gamma$ modules via $p_\Lambda$ and $p_\Gamma$, and the map $\Lambda * \Gamma \to \Lambda \oplus \Gamma$ is $1 \mapsto (1,1)$.

We therefore begin by constructing the ideal $R$ referred to in the statement of Theorem \ref{hoch_prod}.  Let $P \sqcup Q$ be a bimodule resolution for $\Lambda * \Gamma$ constructed as in \ref{bimod_res_subsection}.  Write $\bar{P} * \bar{Q}$ for the subcomplex of $P\sqcup Q$ consisting of 
$(\Lambda * \Gamma)^e$ in degree $0$ and $\bar{P}_n \oplus\bar{Q}_n$ in degree $n>0$, with the induced differentials.  In positive degrees this complex is equal to $\bar{P} \oplus \bar{Q}$. There is a short exact sequence
\[ 0 \to \bar{P} * \bar{Q} \stackrel j \to P \sqcup Q \stackrel \pi \to E\to 0 \]
where $E$ is the quotient complex.  Because $(\bar{P} * \bar{Q})_n$ is a summand of $(P \sqcup Q)_n$, the following sequence is also exact:
\[ 0 \to \Hom _{(\Lambda * \Gamma)^e} (\bar{P} * \bar{Q}, \Lambda * \Gamma)  \to \Hom _{(\Lambda * \Gamma)^e} (P \sqcup Q, \Lambda * \Gamma) \to \Hom _{(\Lambda * \Gamma)^e} (E,\Lambda * \Gamma) \to 0. \]
Applying $\Hom_{(\Lambda * \Gamma)^e}(-, \Lambda* \Gamma)$ gives a long exact sequence
\begin{eqnarray}  \label{les}
\lefteqn{\cdots \stackrel {\omega} \to H^n (\Hom _{(\Lambda * \Gamma)^e} (E, \Lambda * \Gamma)) \stackrel{\pi^*}\to \HH^n (\Lambda * \Gamma) } 
\\ \nonumber
& &  \stackrel{j^*}\to H^n (\Hom _{(\Lambda * \Gamma)^e} (\bar{P} * \bar{Q}, \Lambda * \Gamma))  \stackrel{\omega}\to H^{n+1}(\Hom _{(\Lambda * \Gamma)^e} (E, \Lambda * \Gamma)) \stackrel{\pi^*}\to \cdots \end{eqnarray}
where $\omega$ is the connecting homomorphism. We now define $R$ to be $\im \pi^*$.  Elements of $\im \pi^*$ can be represented by cocycles on $P \sqcup Q$ that kill $\bar P * \bar Q$.  As $\bar P * \bar Q$ is a subcomplex, such  cocycles can be lifted to  chain maps that kill $\bar P * \bar Q$.  Because Hochschild cohomology is graded commutative it follows that $R=\im \pi^*$ is an ideal of $\HH(\Lambda * \Gamma)$.

We now examine the connecting homomorphism $\omega$ more carefully.  Let $a$ be an element of $H^n ( \Hom _{(\Lambda * \Gamma)^e}( \bar{P} * \bar{Q}, \Lambda * \Gamma))$ represented by a cocycle $\alpha : \bar{P}_n \to \Lambda * \Gamma$.  Then $\omega (a)$  is represented by the  map $E_{n+1} \to \Lambda * \Gamma$ sending 
\begin{eqnarray} \label{conn_hom} f \hat{\otimes} p \in \bar{Q}_1 \hat{\otimes} \bar{P}_n  & \mapsto  & d^l(f) \alpha(p) \\
\nonumber p \hat{\otimes} f \in \bar{P}_n \hat{\otimes} \bar{Q}_1 & \mapsto & \alpha(p) d^r(f)
\end{eqnarray}
and killing all other summands (we have abused notation here: $f \hat{\otimes} p$ should be replaced by its image in $E_{n+1}$).  Here, $d^l$ is the map $(1 \otimes \epsilon ) \circ \bar{d}$ regarded as a map $\bar{Q}_1 \to \Lambda * \Gamma$, and $d^r$ is the same with $\epsilon \otimes 1$ instead of $1 \otimes \epsilon$.

%
%
%

\begin{lem} \label{ker_conn_hm}
 A cocycle $\alpha: \bar{P} \to \Lambda * \Gamma$ represents an element of the kernel of the connecting homomorphism if and only if it has image contained in $I(\Lambda) \oplus A(\Gamma)$.
\end{lem}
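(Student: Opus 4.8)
The plan is to read everything off the explicit formula (\ref{conn_hom}) for $\omega$, exploiting two structural features of the product: inside $\Lambda*\Gamma$ one has $I(\Lambda*\Gamma)=I(\Lambda)\oplus I(\Gamma)$ with $I(\Lambda)\cdot I(\Gamma)=I(\Gamma)\cdot I(\Lambda)=0$, and the maps $d^l,d^r$ carry $\bar Q_1$ \emph{onto} all of $I(\Gamma)$ (because $Q\otimes_\Gamma k$ resolves $k$, so its degree-one differential surjects onto $I(\Gamma)$). First I would make the image condition intrinsic. By Frobenius reciprocity $\alpha$ corresponds to a $\Lambda^e$-map $\tilde\alpha\colon P_n\to\Lambda*\Gamma$, and on restriction to $\Lambda^e$ one has $\Lambda*\Gamma\cong\Lambda\oplus I(\Gamma)$, the second summand carrying the trivial bimodule structure since $I(\Lambda)$ annihilates $I(\Gamma)$ on both sides. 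Writing $\tilde\alpha(p)=c(p)+\ell(p)+g(p)$ with $c(p)\in k$, $\ell(p)\in I(\Lambda)$ and $g(p)\in I(\Gamma)$, the condition ``$\im\alpha\subseteq I(\Lambda)\oplus A(\Gamma)$'' means exactly $c\equiv0$ and $\im g\subseteq A(\Gamma)$.

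For the forward implication I would compute the cocycle (\ref{conn_hom}) directly. If $\im\alpha\subseteq I(\Lambda)\oplus A(\Gamma)$ then $c(p)=0$, the summand $\ell(p)$ is killed by $d^l(f)\in I(\Gamma)$ because $I(\Gamma)\cdot I(\Lambda)=0$, and $g(p)\in A(\Gamma)$ is killed by $d^l(f)$ by definition of the annihilator; so $d^l(f)\alpha(p)=0$, and symmetrically $\alpha(p)d^r(f)=0$. Thus the cocycle representing $\omega(a)$ is identically zero and $[\alpha]\in\ker\omega$.

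The converse is where the real content lies, and the key observation is a separation of $I(\Lambda)$ from $I(\Gamma)$. On the one hand $\omega(a)$ takes values in $I(\Gamma)$, since $d^l(f),d^r(f)\in I(\Gamma)$. On the other hand, suppose $\omega(a)=d\beta$; matching the two sides on the summand $\bar Q_1\hat\otimes\bar P_n$ gives $d^l(f)\alpha(p)=\pm\beta(f\hat\otimes\bar d p)$. Because the smallness of the resolution of $\Lambda$ yields $\im\bar d\subseteq I(\Lambda)\cdot\bar P+\bar P\cdot I(\Lambda)$, and because $\hat\otimes$ is formed over $k$ (so that $f\hat\otimes(x\cdot w)=\eps(x)\,f\hat\otimes w=0$ for $x\in I(\Lambda)$), only the right-multiplication part of $\bar d p$ survives inside $f\hat\otimes\bar d p$; hence $\beta(f\hat\otimes\bar d p)\in(\Lambda*\Gamma)\cdot I(\Lambda)=I(\Lambda)$. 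Thus the left side lies in $I(\Gamma)$ and the right side in $I(\Lambda)$, and since $I(\Lambda)\cap I(\Gamma)=0$ both vanish: $[\alpha]\in\ker\omega$ already forces the cocycle (\ref{conn_hom}) to be zero. Unravelling $d^l(f)\alpha(p)=0$ as $f$ ranges over $\bar Q_1$ yields $c(p)\gamma+\gamma\,g(p)=0$ for every $\gamma\in I(\Gamma)$; if $c(p)\neq0$ then $u:=-c(p)^{-1}g(p)\in I(\Gamma)$ would satisfy $\gamma u=\gamma$ for all $\gamma\in I(\Gamma)$, giving $I(\Gamma)=I(\Gamma)^2$, which is impossible. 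So $c(p)=0$, and then $I(\Gamma)\,g(p)=0$, while $\alpha(p)d^r(f)=0$ gives $g(p)\,I(\Gamma)=0$; that is, $g(p)\in A(\Gamma)$. Hence $\im\alpha\subseteq I(\Lambda)\oplus A(\Gamma)$.

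The main obstacle is precisely the step that the converse might seem to require but in fact circumvents: the quotient complex $E$ is not contractible, so a priori $\omega(a)$ could be a nonzero coboundary even when the cochain (\ref{conn_hom}) is nonzero. What rescues the argument is recognising that every coboundary is confined to $I(\Lambda)$ on the relevant summands while $\omega(a)$ lives in $I(\Gamma)$; the delicate points to verify carefully are that $\im\bar d$ uses only $I(\Lambda)$ rather than all of $I(\Lambda*\Gamma)$, inherited from smallness of the resolution of $\Lambda$, and the vanishing $f\hat\otimes(x\cdot w)=0$ for $x\in I(\Lambda)$ coming from the middle tensor being taken over $k$.
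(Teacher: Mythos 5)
Your argument is correct in its main structure and follows the paper's proof of Lemma \ref{ker_conn_hm} exactly: the forward direction is the same direct computation, and your converse is precisely the paper's separation argument --- the cochain (\ref{conn_hom}) representing $\omega(a)$ takes values in $I(\Gamma)$, while smallness of $(P_*,d^P)$ together with the middle tensor being over $k$ confines every coboundary to values in $I(\Lambda)$ on the images of $\bar{Q}_1 \hat{\otimes} \bar{P}_n$ and $\bar{P}_n \hat{\otimes} \bar{Q}_1$, so a coboundary equal to (\ref{conn_hom}) must vanish. In fact you are more thorough than the paper, whose proof stops at ``(\ref{conn_hom}) cannot be a coboundary unless it is zero'' and leaves implicit both the surjectivity of $d^l, d^r$ onto $I(\Gamma)$ and the unravelling of ``(\ref{conn_hom}) $=0$'' into the condition $\im \alpha \subseteq I(\Lambda)\oplus A(\Gamma)$.

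That extra step is where your one questionable claim sits. The assertion that $I(\Gamma)=I(\Gamma)^2$ is impossible is valid when $\Gamma$ is graded connected (then $I(\Gamma)=I(\Gamma)^2$ pushes $I(\Gamma)$ into arbitrarily high degrees, so $I(\Gamma)=0$) and whenever $I(\Gamma)$ is nilpotent, but it can fail for finite-dimensional augmented algebras, which the standing hypotheses allow: take $\Gamma=k\times k$ with augmentation the first projection, so that $I(\Gamma)$ is spanned by the idempotent $e=(0,1)$ and your element $u=e$ is a two-sided identity for $I(\Gamma)$. There your argument breaks down, and in fact so does the lemma: vanishing of (\ref{conn_hom}) says only that each $\alpha(p)$ annihilates $I(\Gamma)$ on both sides, and for $\Gamma=k\times k$ the two-sided annihilator of $I(\Gamma)$ in $\Lambda * \Gamma$ is $I(\Lambda)\oplus k(1-e)$, which is not contained in $I(\Lambda)\oplus A(\Gamma)$. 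Concretely, with $\Lambda=k[x]/x^2$ and $\operatorname{char} k \neq 2$, take a cocycle $\alpha: P_2 \to \Lambda$ with $\eps\circ\alpha \neq 0$ (one exists because $\phi_k$ is nonzero on $\HH^2(\Lambda)$); then $p \mapsto \alpha(p)-(\eps\alpha(p))e$ is a cocycle representing an element of $\ker\omega$ whose image is not contained in $I(\Lambda)\oplus A(\Gamma)$. So the gap is genuine, but it is the paper's as much as yours: the paper never addresses this step at all, and its statement of the lemma carries the same hidden hypothesis (note also that the paper's literal definition of $A(\Gamma)$ would make it zero, since $1\in\Gamma$; like you, one must read it as the two-sided annihilator of $I(\Gamma)$). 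To make your proof airtight, replace the impossibility claim by an explicit hypothesis --- for instance that $I(\Gamma)$ contains no two-sided identity element, which holds automatically in the graded connected and nilpotent (e.g.\ finite-dimensional local) cases; with that assumption your unravelling is complete and correct.
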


\begin{proof}
(\ref{conn_hom}) shows that the element represented by such a cocycle has its image under the connecting homomorphism represented by the zero map. Conversely, suppose $\alpha: \bar{P} \to \Lambda * \Gamma$ is a cocycle and that the map (\ref{conn_hom}) above is a coboundary.  Note that the image of this map is contained in $I(\Gamma)$. In $E$, the differentials of (the images of) $f \hat{\otimes} p$ and $p \hat{\otimes} f \in E_n$ are in $E_{n-1} \cdot I(\Lambda)$ and $I(\Lambda) \cdot E_{n-1}$ respectively, by our hypothesis on the differential on $P$.  It follows that any coboundary maps the images of $\bar{Q}_1 \hat{\otimes} \bar{P}_{n-1} $ and $\bar{P}_{n-1} \hat{\otimes} \bar{Q}_1$ in $E$ into  $I(\Lambda)$, hence (\ref{conn_hom}) cannot be a coboundary unless it is zero.
\end{proof}

Notice that the complex $\bar{P}$ is not exact, in fact its homology computes 
\[
\tor_* ^{\Lambda^e} ( (\Lambda * \Gamma)^e, \Lambda) = \tor_* ^{\Lambda^e}( \Lambda^e \oplus I(\Gamma) \otimes_k I(\Gamma), \Lambda) 
\]
where the action of $\Lambda ^e$ on $I(\Gamma) \otimes_k I(\Gamma)$ is trivial.  Since $\Lambda^e$ is flat as a module over itself this is isomorphic to $ I(\Gamma)\otimes_k I(\Gamma) \otimes_k \tor_* ^{\Lambda} (k,k)$ as a vector space, where we have used \cite[X, Theorem 2.1]{CE} to identify $\tor_* ^{\Lambda^e}(k,\Lambda)$ with $\tor_*^\Lambda(k,k)$.  
%
%
Furthermore 
\begin{eqnarray}
\label{HnPstarQ}
H^n(\Hom_{(\Lambda * \Gamma)^e }(\bar{P} , \Lambda * \Gamma) ) &=& H^n (\Hom_{ \Lambda ^ e } (P, \Lambda \oplus I(\Gamma) ) ) 
\\ \nonumber
&=& \ext ^n _{\Lambda^e} (\Lambda, \Lambda) \oplus \ext ^n _{\Lambda ^e} (\Lambda, I(\Gamma)) 
\\ \nonumber
&=& \HH^n(\Lambda) \oplus \ext_\Lambda ^n (k,k) \otimes _k I(\Gamma).
\end{eqnarray}
The first summand comes from homomorphisms with image contained in $\Lambda \subset \Lambda * \Gamma$, and the second from those with image contained in $I(\Gamma) \subset \Lambda * \Gamma$.  

We now have enough information for an additive decomposition of the Hochschild cohomology groups of $\HH(\Lambda * \Gamma)$.

\begin{prop} \label{additive_decomp}
$\HH^0(\Lambda * \Gamma) \cong Z(\Lambda) * Z(\Gamma)$, and for $n>0$,
\[ \HH^n (\Lambda * \Gamma) \cong \iHH^n(\Lambda) \oplus \iHH^n(\Gamma) \oplus  E^n(\Lambda)\otimes_k A(\Gamma) \oplus E^n(\Gamma) \otimes_k A(\Lambda) \oplus (\im \pi^*)_n. \]
\end{prop}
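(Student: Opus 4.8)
The plan is to read the decomposition off the long exact sequence (\ref{les}), combined with the computation (\ref{HnPstarQ}) of $H^n(\Hom_{(\Lambda*\Gamma)^e}(\bar P,\Lambda*\Gamma))$ and the description of $\ker\omega$ from Lemma \ref{ker_conn_hm}. First I would dispose of degree zero by hand: since $\Lambda*\Gamma$ is the pullback $\{(\lambda,\gamma):\eps\lambda=\eps\gamma\}$ with componentwise multiplication, and the projections onto $\Lambda$ and $\Gamma$ are surjective, a pair $(z_\Lambda,z_\Gamma)$ is central exactly when $z_\Lambda\in Z(\Lambda)$ and $z_\Gamma\in Z(\Gamma)$; hence $\HH^0(\Lambda*\Gamma)=Z(\Lambda*\Gamma)=Z(\Lambda)*Z(\Gamma)$. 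For $n>0$, exactness of (\ref{les}) at $\HH^n(\Lambda*\Gamma)$ gives $\ker j^*=\im\pi^*=(\im\pi^*)_n$ and $\im j^*=\ker\omega$, so over a field there is a (noncanonical) splitting $\HH^n(\Lambda*\Gamma)\cong(\im\pi^*)_n\oplus\ker\omega$. It then remains to identify $\ker\omega$ with the four remaining summands.

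Next I would compute $H^n(\Hom_{(\Lambda*\Gamma)^e}(\bar P*\bar Q,\Lambda*\Gamma))$. As $\bar P*\bar Q$ equals $\bar P\oplus\bar Q$ in positive degrees but carries the single term $(\Lambda*\Gamma)^e$ (not two copies) in degree zero, for $n\geq2$ the cohomology involves only positive degrees, so (\ref{HnPstarQ}) and its $\Gamma$-analogue give
\[ H^n(\Hom_{(\Lambda*\Gamma)^e}(\bar P*\bar Q,\Lambda*\Gamma))\cong\HH^n(\Lambda)\oplus E^n(\Lambda)\otimes_k I(\Gamma)\oplus\HH^n(\Gamma)\oplus E^n(\Gamma)\otimes_k I(\Lambda). \]
The degree-one case needs a separate check, because the shared degree-zero term means $\bar P*\bar Q$ has fewer $1$-coboundaries than $\bar P\oplus\bar Q$. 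Comparing the two through the short exact sequence of complexes $0\to(\Lambda*\Gamma)^e[0]\to\bar P\oplus\bar Q\to\bar P*\bar Q\to0$ (anti-diagonal in degree zero), the only potential extra classes form the cokernel of $(g,h)\mapsto g-h$ on the $0$-cocycles $K_P,K_Q$ of $\bar P,\bar Q$; viewed inside $\Lambda*\Gamma$ these equal $\{(z,\gamma):z\in Z(\Lambda)\}$ and $\{(\lambda,w):w\in Z(\Gamma)\}$, and since any $(\lambda,\gamma)$ splits using scalar and augmentation-ideal parts one has $K_P+K_Q=\Lambda*\Gamma$, so no extra classes appear and the displayed formula holds for $n=1$ as well.

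I would then identify $\ker\omega$ using Lemma \ref{ker_conn_hm}. Formula (\ref{conn_hom}) shows $\omega$ is block diagonal for the $\bar P$/$\bar Q$ decomposition: the $\bar P$-part of a cocycle feeds $\omega$ only through values in $I(\Gamma)$ and the $\bar Q$-part only through values in $I(\Lambda)$, which are complementary in $\Lambda*\Gamma$, so $\ker\omega=\ker\omega_{\bar P}\oplus\ker\omega_{\bar Q}$. By Lemma \ref{ker_conn_hm} a class in $H^n(\Hom(\bar P))$ lies in $\ker\omega_{\bar P}$ iff it is represented by a cocycle with image in $I(\Lambda)\oplus A(\Gamma)$. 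Because $\im d^P\subseteq I(\Lambda)\cdot P+P\cdot I(\Lambda)$ and $I(\Lambda)$ acts trivially on $I(\Gamma)$, every coboundary has image in $I(\Lambda)$, so the $I(\Gamma)$-component of a cocycle is a cohomological invariant; the image condition therefore splits along $\HH^n(\Lambda)\oplus E^n(\Lambda)\otimes_k I(\Gamma)$ into ``the $\Lambda$-component has image in $I(\Lambda)$'' and ``the $I(\Gamma)$-component has image in $A(\Gamma)$''. The first picks out $\ker\phi_k=\iHH^n(\Lambda)$ via (\ref{ILles}), the second picks out $E^n(\Lambda)\otimes_k A(\Gamma)$. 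With the symmetric statement for $\bar Q$ this yields $\ker\omega\cong\iHH^n(\Lambda)\oplus\iHH^n(\Gamma)\oplus E^n(\Lambda)\otimes_k A(\Gamma)\oplus E^n(\Gamma)\otimes_k A(\Lambda)$, and combining with the splitting of the first paragraph finishes the proof.

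The step I expect to be the main obstacle is the degree-one bookkeeping: one must simultaneously handle the mismatch in the degree-zero term of $\bar P*\bar Q$ (which I reduce to $K_P+K_Q=\Lambda*\Gamma$) and verify that $\omega$ still respects the $\bar P$/$\bar Q$ splitting when both parts of a cocycle are detected on the \emph{same} mixed summands $\bar P_1\hat{\otimes}\bar Q_1$ and $\bar Q_1\hat{\otimes}\bar P_1$ of $E$. The saving grace is that there the $\bar P$-contribution lands in $I(\Gamma)$ and the $\bar Q$-contribution in $I(\Lambda)$, so they cannot cancel and the kernels add. A secondary point requiring care is the identification of ``representable by a cocycle with image in $I(\Lambda)$'' with $\ker\phi_k=\iHH^n(\Lambda)$, which rests on the long exact sequence (\ref{ILles}).
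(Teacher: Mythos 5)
Your proof follows the same route as the paper's: split the long exact sequence (\ref{les}) into short exact sequences $0\to(\im\pi^*)_n\to\HH^n(\Lambda*\Gamma)\to(\ker\omega)_n\to 0$, handle degree zero directly, and identify $(\ker\omega)_n$ by combining Lemma \ref{ker_conn_hm} with the computation (\ref{HnPstarQ}). The extra details you supply --- the degree-one comparison of $\bar{P}*\bar{Q}$ with $\bar{P}\oplus\bar{Q}$ via $K_P+K_Q=\Lambda*\Gamma$, the block-diagonality of $\omega$ coming from the complementary ideals $I(\Lambda)$ and $I(\Gamma)$, and the cohomological invariance of the $I(\Gamma)$-component --- are points the paper leaves implicit, and your treatment of them is correct.
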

\begin{proof}
The statement about $\HH^0(\Lambda * \Gamma)$ is clear.  In higher degrees, the long exact sequence (\ref{les}) gives short exact sequences
\[
0 \to (\im \pi^ *)_n \to \HH^n(\Lambda * \Gamma) \to (\ker \omega)_n \to 0.
\]
Using Lemma \ref{ker_conn_hm} and (\ref{HnPstarQ}) we can write $(\ker \omega)_n$ as a direct sum of $\iHH^n(\Lambda) \oplus E^n(\Lambda)\otimes_k A(\Gamma)$ and the corresponding object for $\Gamma$.  This completes the proof.
\end{proof}

The isomorphism in this proposition is realised as follows: elements of $\iHH(\Lambda)$ correspond to cocycles mapping $\bar{P}_* \subset P \sqcup Q$ to $I(\Lambda)\subset \Lambda * \Gamma$ (and killing other summands).  Elements of $E^n(\Lambda)\otimes_k A(\Gamma)$ correspond to cocycles mapping $\bar{P}_* \subset P \sqcup Q$ to $A(\Gamma)\subset \Lambda * \Gamma$ (and killing other summands).  Elements of $\im \pi ^*$ are represented by cocycles on $P \sqcup Q$ that kill $\bar P * \bar Q$.

Returning to Example \ref{kx_example} where $\Lambda=k[x]$, we have $A(\Lambda)=0$ so only the $\iHH$ and $\im \pi ^*$ terms appear in $\HH(\Lambda * \Lambda)$.  The two copies of $\iHH (\Lambda)$ are generated by $x_0, x_{1,r}$ and $y_0, y_{1,r}$ for $r\geq 0$.  The image of $\pi^*$ is the ideal $(\xi_2, \xi_3)$.

The last proposition means that there is a $k$-linear isomorphism between the groups in Theorem \ref{hoch_prod}, so we only need to show that it can be made multiplicative.  A careful reading of the proof of \cite[Proposition 4.4]{SS} shows that it implies the following result.

\begin{prop}
Let $\Delta$ be an augmented algebra and $(P_*, d)$ be a bimodule resolution of $\Delta$ with the property that $\im d \subset I(\Delta) \cdot P + P \cdot I(\Delta)$.  Let $J \subset I(\Delta)$ be an ideal of $\Delta$.  If $\xi, \eta \in \HH(\Delta)$ can be represented by cocycles $P_* \to \Delta$ with images contained in $I(\Delta)$ and $J$ respectively, then $\xi \eta$ can be represented by a cocycle with image contained in $I(\Delta)\cdot J + J\cdot I(\Delta)$.
\end{prop}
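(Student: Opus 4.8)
The plan is to realise the Yoneda product at the chain level and then track the image of the resulting cocycle. Represent $\xi$ by a cocycle $\alpha\colon P_m\to\Delta$ with $\im\alpha\subseteq I(\Delta)$ and $\eta$ by a cocycle $\beta\colon P_n\to\Delta$ with $\im\beta\subseteq J$. I would lift $\alpha$ to a degree $-m$ chain endomorphism $\tilde\alpha_\bullet=(\tilde\alpha_j\colon P_{m+j}\to P_j)$ of $(P_*,d)$ (so that $\tilde\alpha_0$ covers $\alpha$ and $d\tilde\alpha=\tilde\alpha d$); then $\xi\eta$ is represented by the composite cocycle $\beta\circ\tilde\alpha_n\colon P_{m+n}\to P_n\to\Delta$. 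Since $\beta$ is a bimodule map, $\beta(I(\Delta)\cdot P_n+P_n\cdot I(\Delta))=I(\Delta)\cdot\beta(P_n)+\beta(P_n)\cdot I(\Delta)\subseteq I(\Delta)\cdot J+J\cdot I(\Delta)$. So everything reduces to the following lifting statement: a chain lift $\tilde\alpha_\bullet$ of a cocycle with image in $I(\Delta)$ satisfies $\im\tilde\alpha_j\subseteq I(\Delta)\cdot P_j+P_j\cdot I(\Delta)$ for every $j$.

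This lifting statement is the crux, and it is exactly where the hypothesis $\im d\subseteq I(\Delta)\cdot P+P\cdot I(\Delta)$ is used; it is the content extracted from \cite[Proposition 4.4]{SS}. I would prove it by reducing modulo the two-sided ``radical'' $I(\Delta)\cdot P+P\cdot I(\Delta)$. Write $\bar P_*=k\otimes_\Delta P_*\otimes_\Delta k$ for this reduction; the hypothesis on $d$ says precisely that the induced differential on $\bar P_*$ is zero, and the lift induces a reduced map $\bar{\tilde\alpha}_\bullet\colon\bar P_{*+m}\to\bar P_*$. The point is that $\bar{\tilde\alpha}_\bullet$ depends only on the class $\phi_k(\xi)$ (intuitively it realises multiplication by $\phi_k(\xi)$ on $E(\Delta)$), and that because $\im\alpha\subseteq I(\Delta)$ the long exact sequence (\ref{ILles}) puts $\xi$ in $\ker\phi_k$, i.e. $\phi_k(\xi)=0$.

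To turn this into a vanishing of $\bar{\tilde\alpha}_\bullet$ I would apply $-\otimes_\Delta k$ to $\tilde\alpha_\bullet$. The complex $M_*=P_*\otimes_\Delta k$ is a minimal projective resolution of $k$ as a left $\Delta$-module, and $\tilde\alpha\otimes_\Delta k$ is a chain endomorphism of $M_*$ representing $\phi_k(\xi)$ under the identification $\ext^*_{\Delta^e}(\Delta,k)\cong\ext^*_\Delta(k,k)$. Since $\phi_k(\xi)=0$, this chain map is null-homotopic, say $\tilde\alpha\otimes_\Delta k=d^M s+s\,d^M$. Applying the functor $k\otimes_\Delta-$ and using that the differential of $M_*$ vanishes after this further reduction (minimality of $M_*$), both terms $d^M s$ and $s\,d^M$ die, so $\bar{\tilde\alpha}_\bullet=k\otimes_\Delta(\tilde\alpha\otimes_\Delta k)=0$. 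This is precisely the assertion $\im\tilde\alpha_j\subseteq I(\Delta)\cdot P_j+P_j\cdot I(\Delta)$, completing the lifting statement and hence the proposition.

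I expect the main obstacle to be the bookkeeping in this last step: checking that $\tilde\alpha\otimes_\Delta k$ genuinely represents $\phi_k(\xi)$, and that passing a null-homotopy through the two successive reductions $-\otimes_\Delta k$ and $k\otimes_\Delta-$ is legitimate (both being applications of right-exact functors to maps of left, resp. bi-, modules). It is worth noting that the argument is insensitive to the choice of chain lift, since any two lifts differ by a homotopy whose reduction vanishes for the same reason; in particular no finiteness hypothesis on $\Delta$ is needed, even though the ambient Theorem \ref{hoch_prod} carries one.
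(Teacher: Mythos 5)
Your argument is correct, but note first that the paper never actually proves this proposition: it is justified there only by the remark that a careful reading of the proof of \cite[Proposition 4.4]{SS} yields it, with the radical replaced by the augmentation ideal. So what you have produced is a self-contained proof of a statement the paper outsources to the literature, and your mechanism --- compose a chain-level lifting of one cocycle with the other cocycle, and use smallness of the resolution to confine the lifting's image to $I(\Delta)\cdot P + P\cdot I(\Delta)$ --- is precisely the mechanism being cited. Two fine points deserve emphasis. First, the asymmetry in your construction is essential and you resolved it the right way: you must lift $\alpha$ (the cocycle with image merely in $I(\Delta)$) and apply $\beta$ on the outside, since $\beta(I(\Delta)\cdot P_n + P_n\cdot I(\Delta)) \subseteq I(\Delta)\cdot J + J\cdot I(\Delta)$; lifting $\beta$ instead and applying $\alpha$ outside would only yield image in $I(\Delta)^2$, which need not lie in $I(\Delta)\cdot J + J\cdot I(\Delta)$, because your lifting lemma is blind to $J$ --- smallness is a statement about $I(\Delta)$ alone. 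Second, under the usual Yoneda convention the composite $\beta\circ\tilde\alpha_n$ represents $\eta\xi$ rather than $\xi\eta$; this is harmless since graded commutativity of $\HH(\Delta)$ gives $\xi\eta=(-1)^{mn}\eta\xi$ and a sign does not change the image of a representing cocycle, but the sentence should be phrased accordingly. The verifications you defer are genuinely routine: $M_*=P_*\otimes_\Delta k$ is exact in positive degrees because each $P_j$ is projective as a right $\Delta$-module, so $H_*(M_*)$ computes $\tor_*^{\Delta}(\Delta,k)$ (the same argument the paper uses for $R_*$ in Section \ref{ord_cohom_prod}); $\im d^M\subseteq I(\Delta)\cdot M_*$ holds because $P_j\cdot I(\Delta)$ dies under $-\otimes_\Delta k$; and the two-stage reduction of the null-homotopy is legitimate since both reductions are additive functors and $k\otimes_\Delta d^M=0$ kills both terms of $d^M s + s\, d^M$. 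Your closing remark is also right: no finiteness hypothesis is needed beyond the assumed existence of the small bimodule resolution, which is exactly how the proposition is stated.
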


It follows immediately that elements of $\HH(\Lambda * \Gamma)$ which are identified with elements of $E(\Lambda) \otimes _k A(\Gamma)$ under the isomorphism in Proposition \ref{additive_decomp} have product zero with any element of $\ker \phi_k: \HH(\Lambda * \Gamma) \to \Lambda * \Gamma$.  In particular, this verifies the last sentence of Theorem \ref{hoch_prod}.  The only thing left to prove is that the elements of $\HH(\Lambda * \Gamma)$ identified with elements of $\iHH(\Lambda)$ and $\iHH(\Gamma)$ in Proposition \ref{additive_decomp} multiply, modulo $R$, like $\iHH(\Lambda) * \iHH(\Gamma)$.  That elements of the copies of $I(\iHH(\Lambda))$ and $I(\iHH(\Gamma))$ have product zero is clear. Given an element of $\iHH(\Lambda )$ we now show how to associate an element of $\HH(\Lambda * \Gamma)$ in a manner that induces a multiplicative map $\iHH(\Lambda ) \to \HH(\Lambda * \Gamma) /R$ whose image modulo $R$ is the copy of $\iHH(\Lambda)$ in $\HH(\Lambda * \Gamma)$ identified in Proposition \ref{additive_decomp}.

Let $f: P_n \to \Lambda$ be a cocycle representing an element of $\iHH(\Lambda)$, so that the image of $f$ is contained in $I(\Lambda)$.  Lift $f$ to a chain map $f_*$ on $P_*$.  We now build a chain map $F_*: P \sqcup Q \to P \sqcup Q$ using $\bar{f}_*$ that lifts the cocycle $\hat f: P \sqcup Q \to \Lambda * \Gamma$ sending $x \otimes_\Lambda p \otimes_\Lambda y \in \bar{P}_n$ to $xf(p)y \in \Lambda * \Gamma$, where $x,y \in \Lambda * \Gamma$ and $p \in P_n$.  The map $F_*$ is defined as follows:
\begin{enumerate}
\item $F(p) = \bar{f}_m (p)$ if $p \in \bar{P}_m$ and $m \geq n$.
\item $F( p \hat \otimes \cdots \hat \otimes q) = \bar f_m(p) \hat \otimes \cdots \hat \otimes q$ for $p \hat \otimes \cdots \hat \otimes q \in \bar{P}_m \hat \otimes \cdots \hat \otimes \bar Q_r$ and $m>n$.
\item $F( q \hat \otimes \cdots \hat \otimes p) = q \hat \otimes \cdots \hat \otimes \bar f_m(p)$ for $q \hat \otimes \cdots \hat \otimes p \in \bar{Q}_r \hat \otimes \cdots \hat \otimes \bar P_m$ and $m>n$.
\item $F( p_1 \hat \otimes \cdots \hat \otimes p_2) = \bar f_m(p_1) \hat \otimes \cdots \hat \otimes p_2 + p_1 \hat \otimes \cdots \hat \otimes \bar f_m(p_2)$ for $p_1 \hat \otimes \cdots \hat \otimes p_2 \in \bar{P}_{m_1} \hat \otimes \cdots \hat \otimes \bar P_{m_2}$ and $m_1, m_2 >n$
\end{enumerate}
On a term beginning or ending with $\bar P_n$, we make the following modification to the formulas above.  If $p \hat \otimes q_1 \hat \otimes \cdots \hat \otimes q_2 \in \bar{P}_n \hat \otimes \bar{Q}_r \hat \otimes \cdots \hat \otimes \bar Q_s$ then we define its image under $F$ to be $( (1 \otimes \eps) \circ \bar{f}_0) (p) \cdot q_1 \hat \otimes \cdots \hat \otimes q_2$
where $(1 \otimes \eps) \circ \bar{f}_0$ is considered as a map $\bar{P}_m \to \Lambda * \Gamma$.  Similar changes are made in the other cases.
Finally, $F$ is zero on all other summands of $P \sqcup Q$.  The reader may verify that $F$ is a chain map lifting $\hat f$.
Write $c(f_*)$ for the chain map $F_*$ built from $f_*$.

\begin{lem}
$c$ induces a well defined multiplicative map $\iHH(\Lambda) \to \HH(\Lambda * \Gamma)/R$.
\end{lem}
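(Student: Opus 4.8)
The plan is to define the map on a class $[f] \in \iHH^n(\Lambda)$ by $[f] \mapsto [\hat f] \bmod R$, where $\hat f$ is the cocycle $P \sqcup Q \to \Lambda * \Gamma$ attached to a representative cocycle $f \colon P_n \to \Lambda$ with image in $I(\Lambda)$, and then to extract both well-definedness and multiplicativity from the long exact sequence (\ref{les}). Since $c(f_*)$ is a chain map lifting $\hat f$, the map $\hat f$ is a cocycle and any two chain-map lifts of it are homotopic, so the class $[\hat f] \in \HH^n(\Lambda * \Gamma)$ depends only on the cocycle $f$, not on the chosen lift $f_*$. The key observation is that under the restriction map $j^*$ of (\ref{les}), whose kernel is exactly $R = \im \pi^*$ and whose image is $\ker \omega$, we have $j^*[\hat f] = [f]$: indeed $\hat f$ restricted to $\bar P * \bar Q$ sends $\bar P_n$ into $I(\Lambda)$ by $x \otimes p \otimes y \mapsto x f(p) y$ and kills $\bar Q_n$, which is precisely the cocycle identified with $[f] \in \iHH^n(\Lambda) \subseteq (\ker \omega)_n$ in (\ref{HnPstarQ}) and the discussion following Proposition \ref{additive_decomp}.

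Well-definedness is then immediate. If $f$ and $f'$ are cocycles with image in $I(\Lambda)$ representing the same class of $\iHH^n(\Lambda)$, then $j^*([\hat f] - [\hat{f'}]) = [f] - [f'] = 0$, so $[\hat f] - [\hat{f'}] \in \ker j^* = R$ and $[\hat f] \equiv [\hat{f'}] \pmod R$. Hence $c$ induces a well-defined map $\iHH(\Lambda) \to \HH(\Lambda * \Gamma)/R$.

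For multiplicativity I would compute the Yoneda product $[\hat f] \cdot [\hat g]$ at chain level using the lifts $c(f_*)$ and $c(g_*)$: it is represented by the cocycle $\hat f \circ c(g_*)_n$, where $c(g_*)_n \colon (P \sqcup Q)_{n+m} \to (P \sqcup Q)_n$. On the other hand $c([f][g])$ is represented by $\widehat{f \circ g_n}$, since $f \circ g_n$ is a cocycle with image in $I(\Lambda)$ representing $[f][g]$ in $\iHH(\Lambda)$. I claim these two cocycles differ by a cocycle $\rho$ vanishing on $\bar P * \bar Q$, so that $[\rho] \in \im \pi^* = R$ and $[\hat f][\hat g] \equiv c([f][g]) \pmod R$. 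As in each positive degree $\bar P * \bar Q$ consists of the single-block summands, it suffices to compare the two cocycles on $\bar P_{n+m}$ and on $\bar Q_{n+m}$. On $\bar P_{n+m}$ the single-block clause in the construction of $c(g_*)$ gives $c(g_*)_n(p) = \bar g_n(p) \in \bar P_n$, so both cocycles send $x \otimes p \otimes y$ to $x f(g_n(p)) y$; and on $\bar Q_{n+m}$ both vanish, since $c(g_*)$ annihilates every summand with no $\bar P$-block at either end. Thus $\rho$ kills $\bar P * \bar Q$, as required.

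The only delicate point is the chain-level bookkeeping in the multiplicativity step: one must check that on the single-block summands the composite $\hat f \circ c(g_*)_n$ genuinely avoids the degree-lowering modifications (those using $(1 \otimes \eps) \circ \bar g_0$) that the construction of $c$ applies to the boundary blocks of multi-tensor summands, and confirm the sign conventions. This is straightforward because those modifications affect only summands with at least two tensor factors, whereas the positive-degree part of $\bar P * \bar Q$ is single-block; the degree-zero case, where the class is a scalar multiple of the unit, is trivial. Once the restrictions of both cocycles to $\bar P * \bar Q$ are pinned down, the conclusion follows at once from the characterisation of $\im \pi^*$ as the classes representable by cocycles killing $\bar P * \bar Q$.
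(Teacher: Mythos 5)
Your argument is correct and is essentially the paper's: both proofs rest on the characterisation of $R=\im \pi^*$ as the classes represented by cocycles on $P \sqcup Q$ that kill $\bar P * \bar Q$, and both establish well-definedness and multiplicativity by checking that the relevant difference of cocycles vanishes on the single-block summands. Phrasing well-definedness through exactness of (\ref{les}) (that is, $\ker j^* = \im \pi^*$ together with $j^*[\hat f]=[f]$) instead of the paper's direct manipulation with a coboundary $f=g\circ d$ is only a cosmetic difference, since (\ref{les}) is already part of the machinery used to prove Proposition \ref{additive_decomp}.

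One small correction: your parenthetical justification for the degree-zero case is wrong. $\iHH^0(\Lambda)$ equals $k\oplus\bigl(Z(\Lambda)\cap I(\Lambda)\bigr)$, not just the scalar multiples of the unit --- in Example \ref{kx_example} it contains the non-scalar element $x_0$ --- so degree-zero classes are genuinely non-trivial. Fortunately no separate case is needed: such a class is represented by a cocycle $P_0\to\Lambda$ with image in $I(\Lambda)$, and your single-block comparison of $\hat f\circ c(g_*)$ with $\widehat{f\circ g_n}$ on $\bar P_{n+m}$ and $\bar Q_{n+m}$ goes through verbatim when $n$ or $m$ is zero.
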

\begin{proof}
For well-definedness, we must show that if $f$ is a coboundary then $c(f_*)$ is zero in $\HH(\Lambda * \Gamma)/R$.  Suppose $f = g \circ d : P_n \to \Lambda$ is a coboundary.  Then $\hat g$ and  $\hat f \circ \delta$ agree on $\bar P * \bar Q$, so their difference is a cocycle that is zero on $\bar P * \bar Q$ hence represents an element of $R$.  The map is multiplicative because if $f_*$ and $g_*$ are two chain maps on $P_*$ then $c(f_* \circ g_*) - c(f_*) \circ c(g_*)$ is zero on $\bar P * \bar Q$.  The result follows.
\end{proof}

This completes the proof of Theorem \ref{hoch_prod}.

\end{subsection}

\end{section}

\bibliography{local_cohom}
\bibliographystyle{amsalpha}

\end{document}